\numberwithin{equation}{section}
\theoremstyle{plain}
    \newtheorem{theorem}{Theorem}
    \newtheorem{proposition}{Proposition}[section]
    \newtheorem{lemma}[proposition]{Lemma}
\theoremstyle{definition}
    \newtheorem{definition}[proposition]{Definition}
    \newtheorem{question}[proposition]{Question}
\theoremstyle{remark}
	\newtheorem{remark}[proposition]{Remark}
\newcommand{\ZZ}{\mathbb{Z}}
\newcommand{\QQ}{\mathbb{Q}}
\renewcommand{\H}{\mathcal{H}}
\newcommand{\isom}{\cong}
\title{
    Computations of HOMFLY homology
}
\author{Keita Nakagane, Taketo Sano}
\newcommand{\addresses}{{
  \noindent
  Keita Nakagane, \textsc{}\\%
  \textit{E-mail address}: \url{keita.nakagane@gmail.com}
  
  \bigskip

  \noindent
  Taketo Sano, \textsc{RIKEN iTHEMS, Wako, Saitama 351-0198, Japan }\\
  \textit{E-mail address}: \url{taketo.sano@riken.jp}
}}
\begin{document}

    \maketitle
    \begin{abstract}
    Khovanov--Rozansky's HOMFLY homology is determined for all prime knots with up to 11 crossings by direct computations. 
\end{abstract}
    
    
    \section{Introduction} \label{sec:intro}

The \textit{HOMFLY polynomial}\footnote{The definitions of the HOMFLY polynomial and the HOMFLY homology follows \cite{Ras15}.} \cite{HOMFLY, Jones, PT} $P(L)(q, a)$ is an invariant of oriented links $L$ in $S^3$, defined by the skein relation
\[
    a P(
    \begin{tikzpicture}[baseline=1]
        \draw[->] (0.3,0)--(0,0.3);
        \draw (0,0)--(0.1,0.1);
        \draw[->] (0.2,0.2)--(0.3,0.3);
    \end{tikzpicture}
    )
    -a^{-1}P(
    \begin{tikzpicture}[baseline=1]
        \draw[->] (0,0)--(0.3,0.3);
        \draw (0.3,0)--(0.2,0.1);
        \draw[->] (0.1,0.2)--(0,0.3);
    \end{tikzpicture}
    )
    =(q - q^{-1})P(
    \begin{tikzpicture}[baseline=1]
        \draw [bend right = 45,->] (0,0) to (0,0.3);
        \draw [bend left = 45,->] (0.3,0) to (0.3,0.3);
    \end{tikzpicture}
    )
\]
and the normalization $P(\text{trivial knot}) = 1$.
The invariant specializes to the Alexander polynomial by $\Delta(L)(q) = P(L)(q, 1)$ and to the $sl(N)$ polynomials by $P_N(L)(q) = P(L)(q, q^N)$ for $N > 0$. In particular, $N = 2$ gives the Jones polynomial.

Khovanov and Rozansky \cite{KR08} categorified the HOMFLY polynomial as a triply graded homology group $\H^{*,*,*}(L)$, called the (reduced) \textit{HOMFLY homology}, following Khovanov's categorification of the Jones polynomial \cite{Kho00} and Khovanov--Rozansky's categorification of the $sl(N)$ polynomial \cite{KR08-slN}. It's graded Euler characteristic gives the HOMFLY polynomial
\[
    P(L)(q,a) = \sum_{i,j,k} (-1)^{(k-j)/2} q^ia^j\dim \H^{i,\,j,\,k}(L).
\]

Although HOMFLY homology is defined combinatorially, due to the complexity of the construction, the homology groups have been determined only for some specific classes of knots and links. In \cite{Ras07}, Rasmussen showed that the homology of two-bridge knots are determined by the HOMFLY polynomial and the signature. In \cite{HM19}, Mellit and Hogancamp gave combinatorial formulas to compute the homology of torus links. As for direct computations, Rasmussen \cite{Ras15} used Webster's computer program \cite{Web} and determined the homologies for all prime knots with up to $9$ crossings.

In this paper, we give an algorithm to compute the HOMFLY homology of knots, and with the implementation of the algorithm we have determined the homologies for all prime knots with up to 11 crossings. The program is available at \cite{kr-calc}, together with all of the computation results in both human-readble and machine-friendly formats. Here we compress the results by listing only \textit{KR-thick knots}. Recall that the \textit{$\Delta$-grading} for $\H^{i,j,k}$ is defined by $\Delta = i + j + k$, and a knot $K$ is \textit{KR-thin} if $\H^{i,j,k}(K)$ is supported in a single $\Delta$-grading, \textit{KR-thick} otherwise. It is known that the HOMFLY homology for a KR-thin knot is determined by its signature and its HOMFLY polynomial, hence KR-thick knots are the ones of interest. 

\begin{theorem}
    Up to $11$ crossings, the following $96$ are the only KR-thick prime knots, all of which are supported in two $\Delta$-gradings. (The actual results are listed in \Cref{sec:results}.) 
    \begin{center}
    \resizebox{0.98\columnwidth}{!}{%
    \begin{tabular}{llllllllll}
        $8_{19}$ & $9_{42}$ & $9_{43}$ & $9_{47}$ & $10_{124}$ & 
        $10_{125}$ & $10_{128}$ & $10_{132}$ & $10_{134}$ & $10_{136}$ \\ 
        $10_{138}$ & $10_{139}$ & $10_{142}$ & $10_{145}$ & $10_{152}$ & 
        $10_{153}$ & $10_{154}$ & $10_{160}$ & $10_{161}$ & $11a_{263}$ \\ 
        $11n_{2}$ & $11n_{6}$ & $11n_{9}$ & $11n_{12}$ & $11n_{13}$ & 
        $11n_{16}$ & $11n_{19}$ & $11n_{20}$ & $11n_{23}$ & $11n_{24}$ \\ 
        $11n_{27}$ & $11n_{30}$ & $11n_{31}$ & $11n_{34}$ & $11n_{36}$ & 
        $11n_{38}$ & $11n_{39}$ & $11n_{41}$ & $11n_{42}$ & $11n_{44}$ \\
        $11n_{45}$ & $11n_{47}$ & $11n_{49}$ & $11n_{57}$ & $11n_{60}$ & 
        $11n_{61}$ & $11n_{64}$ & $11n_{67}$ & $11n_{69}$ & $11n_{70}$ \\ 
        $11n_{73}$ & $11n_{74}$ & $11n_{76}$ & $11n_{77}$ & $11n_{78}$ & 
        $11n_{79}$ & $11n_{80}$ & $11n_{81}$ & $11n_{82}$ & $11n_{88}$ \\
        $11n_{90}$ & $11n_{92}$ & $11n_{93}$ & $11n_{95}$ & $11n_{96}$ & 
        $11n_{97}$ & $11n_{102}$ & $11n_{104}$ & $11n_{107}$ & $11n_{111}$ \\ 
        $11n_{116}$ & $11n_{118}$ & $11n_{120}$ & $11n_{126}$ & $11n_{133}$ &
        $11n_{135}$ & $11n_{138}$ & $11n_{143}$ & $11n_{145}$ & $11n_{147}$ \\ 
        $11n_{148}$ & $11n_{149}$ & $11n_{150}$ & $11n_{151}$ & $11n_{152}$ & 
        $11n_{153}$ & $11n_{155}$ & $11n_{158}$ & $11n_{161}$ & $11n_{166}$ \\ 
        $11n_{169}$ & $11n_{173}$ & $11n_{175}$ & $11n_{177}$ & $11n_{182}$ & 
        $11n_{183}$
    \end{tabular}
    }
    \end{center}
\end{theorem}

Note that there is one alternating knot $11a_{263}$ that is KR-thick, as pointed out in \cite{Ras07}. This is in contrast to the fact that alternating knots are necessarily homologically thin for Khovanov homology \cite{Lee:2005} and for knot Floer homology \cite{OS:2003}. More observations on the results are given in \Cref{sec:observations}. We also suggest Chandler and Gorsky's paper \cite{CG22} where they use our results to further study the structures of the HOMFLY homologies, and computes the $S$-invariant and compares it to the $sl(N)$ concordance invariants.

This paper is organized as follows. In \Cref{sec:prelim}, we review the definition of the reduced HOMFLY homology $\H(L)$. In \Cref{sec:algo} we describe the proposed algorithm, and in \Cref{sec:comptuations} we discuss the actual computation. Finally in \Cref{sec:observations} we give more observations on the computational results. 

\subsection*{Acknowledgements} The authors thank Eugene Gorsky for helpful discussions. The authors also thank Hidetoshi Masai for organizing \textit{ToKoDai Topology Seminar} from which this joint work started. 
We also thank Katsumi Kishikawa and Toru3 for helping us developing the computer program, and Takumi Doi for helping us running the program on a high performance computer. 
The work of TS was supported by JSPS KAKENHI Grant Numbers 21J15094, 23K12982, RIKEN iTHEMS Program and academist crowdfunding.

    \section{Preliminaries} \label{sec:prelim}

Here we review the definition of the reduced HOMFLY homology based on Rasmussen's formulation \cite{Ras15}. Since we only consider the reduced theory, the term reduced will be omitted hereafter. 

Let $D$ be an oriented link diagram with $n$ crossings. For simplicity, we assume that $D$ has at least one crossing and that $D$ is connected as a planar diagram. 
The \textit{edge ring} $R(D)$ of $D$ is defined as follows. 
Let $G(D)$ be the underlying oriented $4$-valent graph of $D$ with edges labeled $1, \ldots, 2n$. Let $R'(D) = \QQ[X_1, \ldots, X_{2n}]$ endowed with a $\ZZ$-grading such that each $X_i$ has degree $2$. This grading is called the \textit{$q$-grading} on $R'(D)$. 
For each crossing $c$ of $D$, define
\[
    \rho(c) = X_k + X_l - X_i - X_j \in R'(D),
\]
where $i, j$ are the indices of the incoming edges at $c$, and $k, l$ are the ones for the outgoing edges (see \Cref{complex}). Also define
\[
    \theta = \sum_i X_i \in R'(D).
\]
Let $I(D)$ be the ideal of $R'(D)$ generated by $\theta$ and $\rho(c)$ where $c$ runs over all crossings of $D$. The \textit{edge ring} $R(D)$ is defined by the quotient ring $R'(D)/I(D)$, endowed with the $q$-grading inherited from $R'(D)$.

\begin{figure}[t]
    \begin{center}
    \begin{tikzpicture}[auto]
    \node (a) at (0, 0) {$R\{2,-2,-2\}$};
    \node (b) at (0,2.5) {$R\{0,-2,0\}$};
    \node (c) at (5,0) {$R\{0,0,-2\}$};
    \node (d) at (5,2.5) {$R\{0,0,0\}$};
    \node (C) at (-2.5, 1.25) {$C(D_c)=$};
    \draw[-latex, thick] (a) to node {$X_l-X_i$} (b);
    \draw[-latex, thick] (a) to node[swap] {$ (X_k-X_i)(X_l-X_i)$} (c);
    \draw[-latex, thick] (b) to node {$X_k-X_i$} (d);
    \draw[-latex, thick] (c) to node[swap] {$1$} (d);
    \node (p) at (7.75, 0.8) {$c$};
    \node (v1) at (7,2) {$k$};
    \node (v2) at (8.5,2) {$l$};
    \node (v3) at (7,0.5) {$i$};
    \node (v4) at (8.5,0.5) {$j$};
    \node (center) at (7.75,1.25) {};
    \draw [->, thick] (v3) edge (v2);
    \draw [->, thick] (center) edge (v1);
    \draw [thick] (v4) edge (center);
\end{tikzpicture}

\vspace{12pt}

\begin{tikzpicture}[auto]
    \node (a) at (-0.4, 0) {$R\{0,-2,0\}$};
    \node (b) at (-0.4,2.5) {$R\{0,-2,2\}$};
    \node (c) at (4.6,0) {$R\{0,0,0\}$};
    \node (d) at (4.6,2.5) {$R\{-2,0,2\}$};
    \node (C) at (-2.5, 1.25) {$C(D_c)=$};
    \draw[-latex, thick] (a) to node {$1$} (b);
    \draw[-latex, thick] (a) to node[swap] {$X_k-X_i$} (c);
    \draw[-latex, thick] (b) to node {$ (X_k-X_i)(X_l-X_i)$} (d);
    \draw[-latex, thick] (c) to node[swap] {$X_l-X_i$} (d);
    \node (p) at (7.75, 0.8) {$c$};
    \node (v1) at (7,2) {$k$};
    \node (v2) at (8.5,2) {$l$};
    \node (v3) at (7,0.5) {$i$};
    \node (v4) at (8.5,0.5) {$j$};
    \node (center) at (7.75,1.25) {};
    \draw [->, thick] (v4) edge (v1);
    \draw [->, thick] (center) edge (v2);
    \draw [thick] (v3) edge (center);
\end{tikzpicture}
    \end{center}
    \caption{The definition of $C(D_c)$}\label{complex}
\end{figure}

Next we define a chain complex $C(D)$ over $R(D)$. First for each crossing $c$ in $D$, let $D_c$ denote the subdiagram composed of the four edges of $D$ adjacent to $c$, and associate to it a double complex $C(D_c)$. The definition of $C(D_c)$ depends on the sign of $c$, as illustrated in \Cref{complex}. As a module, it is a direct sum of four copies of $R(D)$ endowed with a triple grading, where the first grading corresponds to the $q$-grading, the second and the third gradings are the $\ZZ^2$-homological gradings called the \textit{horizontal} and the \textit{vertical} gradings respectively. $R\{i, j, k\}$ denotes the copy of $R(D)$ with the triple grading shifted accordingly, so that $1 \in R$ has $q$-grading $i$ and homological grading $(j, k)$.  The two horizontal arrows give the \textit{horizontal differential} $d_H$ and the two vertical arrows give the \textit{vertical differential} $d_V$ on $C(D)$. One can see that $d_H$ and $d_V$ have degrees $(2,2,0)$ and $(0,0,2)$ respectively, and that they commute. Now define 
\[
    C(D) = \bigotimes_c C(D_c)
\]
where the tensor product is taken over $R(D)$ and $c$ runs over all crossings of $D$. Consider the homology group
\[
    \H'(D) = H(H(C(D), d_H), (d_V)_*)
\]
which reads, first take the homology of $C(D)$ with respect to the horizontal differential $d_H$, and then take the homology of $H(C(D), d_H)$ with respect to the induced differential $(d_V)_*$. We regard it as a triply graded $\QQ$-vector space, where $\H'^{i,j,k}(D)$ is the homology group of the $\QQ$-subcomplex of $C(D)$ with homogeneous $q$-grading $i$ at homological grading $(j, k)$. Finally define
\begin{equation} \label{eq:Hbar-def}
    \H(D) = \H'(D)\{-w + b - 1,\ w + b - 1,\ w - b + 1\},
\end{equation}
where $w$ is the writhe of $D$ and $b$ is number of Seifert circles of $D$. When $D$ is a closed braid diagram, $b$ is the number of strands.

\begin{theorem}[\cite{KR08, Ras15}]
    Let $L$ be an oriented link with a closed braid diagram $D$. The isomorphism class of $\H(D)$ as a triply graded $\QQ$-vector space is independent of the choice of $D$.
\end{theorem}

Thus the following definition is justified.

\begin{definition}
    For an oriented link $L$ with closed braid diagram $D$, the homology group $\H(D)$ is denoted $\H(L)$ and is called the \textit{reduced HOMFLY homology} of $L$.
\end{definition}

\begin{remark}\label{rem:generaldiag}
    The definition of $\H(D)$ is valid for general link diagrams $D$, but it is necessary that $D$ is chosen to be a braid closure for its invariance. More specifically, it is known that $\H(D)$ is not invariant under the RIIb move \cite{Abel17, Nak20}. Further considerations are given in \Cref{sec:observations}. 
\end{remark}

The following theorem states that the graded Euler characteristic of the HOMFLY homology $\H(L)$ gives the HOMFLY polynomial $P(L)$. 

\begin{theorem}[\cite{KR08, Ras15}]
    For any oriented link $L$, we have
    \begin{equation*}
        P(L)(q,a) = \sum_{i,j,k} (-1)^{(k-j)/2} q^ia^j\dim \H^{i,\,j,\,k}(L).
    \end{equation*}
\end{theorem}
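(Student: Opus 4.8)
The plan is to prove that the specialization $\chi(L):=\mathcal{P}(L)(q,a,-1)$ obeys the same skein relation and normalization that define the HOMFLY polynomial, and then to conclude $\chi=P$ by the uniqueness of $P$. Since switching or orientedly resolving one crossing of a closed braid diagram again yields a closed braid diagram, the three diagrams $L_+,L_-,L_0$ in the skein relation may all be taken to be closed braid diagrams; thus $\Hbar$ is defined for each and the invariance theorem applies throughout. It therefore suffices to verify (i) a normalization and (ii) the relation $a^{-1}\chi(L_+)-a\,\chi(L_-)=(q^{-1}-q)\chi(L_0)$.

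For (i) I would take $D$ to be the closure of $\sigma_1\in B_2$, a connected one-crossing diagram of the unknot, and compute $\Hhat(D)$ directly from \Cref{complex}. Incorporating the shift of \eqref{eq:Hbar-def} (here $w=1$ and $s=2$), one finds that $\Hbar$ of the unknot is one-dimensional and concentrated in tridegree $(0,0,0)$, so that $\chi(\mathrm{unknot})=1$.

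The heart is (ii). I would first observe that, because $d_V$ has tridegree $(0,0,2)$, it multiplies the weight $q^{i}a^{j}t^{(k-j)/2}$ by $t$, which equals $-1$ at $t=-1$; hence passing to $(d_V)_*$-homology leaves the signed Poincar\'e series unchanged, and the signed Poincar\'e series of $\Hhat(D)$ equals that of the horizontal homology $H(C(D),d_H)$. The key structural input is then that, in this horizontal homology, the local bicomplex $C_p(D)$ of \Cref{complex} becomes a two-term complex—a mapping cone in the vertical direction—between the modules of the oriented resolution $L_0$ and the singular (wide-edge) resolution $L_s$ of the crossing, each carrying an explicit $q$- and $a$-grading shift read off from the figure. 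Because this cone is vertical and $d_V$ reverses the sign $(-1)^{(k-j)/2}$, the signed Poincar\'e series is alternating across it; writing the positive and negative crossings as such cones and eliminating the common $L_s$-contribution between $L_+$ and $L_-$ yields, after the global writhe/Seifert shifts of \eqref{eq:Hbar-def} are incorporated, exactly the desired skein relation.

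I expect the main obstacle to be the local computation underlying this structural input: identifying $H(C_p,d_H)$ with the oriented and singular resolution modules over the reduced edge ring and pinning down its Poincar\'e series is the genuine MOY-type content, and it cannot be circumvented by a chain-level Euler characteristic, since $d_H$ has tridegree $(2,2,0)$ and therefore multiplies the weight by $-q^{2}a^{2}$ at $t=-1$, so that passing to $d_H$-homology really does change the signed Poincar\'e series. A secondary but delicate point is the grading bookkeeping: the crossing number, writhe, and number of Seifert circles all change when a crossing is resolved, and these shifts must combine precisely to convert the categorified cone relation into the skein relation with the coefficients $a^{\pm1}$ and $q^{-1}-q$.
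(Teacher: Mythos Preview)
The paper does not supply its own proof of this theorem: it is stated with a citation to \cite{KR08,Ras15} and no argument is given. So there is nothing in the paper to compare your proposal against.

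On the merits: your outline is the standard route and is correct in spirit. The reduction ``$d_V$ has weight $t$, so at $t=-1$ the signed Poincar\'e series of $\Hhat(D)$ equals that of the horizontal homology $H(C(D),d_H)$'' is valid because the vertical grading is bounded (the cube has $n$ layers) and each $(i,j,k)$-graded piece is finite-dimensional. Your identification of the two rows of $C_p(D)$ with the oriented and singular resolutions is also the right picture: fixing one crossing $p$, the full complex $C(D)$ is literally the mapping cone, in the $k$-direction at $p$, of a chain map between the complexes attached to the two resolutions, so after horizontal homology one gets a long exact sequence and hence an additive relation on Poincar\'e series; eliminating the singular term between $L_+$ and $L_-$ yields the skein relation once the shifts of \eqref{eq:Hbar-def} are inserted.

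Two comments on what you flag as obstacles. First, your worry about $d_H$ is well-placed but does not actually block the argument: you never need to pass to the chain level for $d_H$, since the cone decomposition is in the \emph{vertical} direction and survives to horizontal homology as a long exact sequence, which is all the Euler-characteristic relation requires. Second, the phrase ``identifying $H(C_p,d_H)$ with the resolution modules'' is slightly off: it is not the local horizontal homology that you need, but rather the observation that the two horizontal rows of $C_p(D)$, when tensored with the rest of the complex, give precisely the complexes $C(D_0)$ and $C(D_s)$ of the resolved diagrams (over the same edge ring, after the obvious identification). This is a chain-level statement and avoids the MOY computation you anticipate; the genuine bookkeeping burden is, as you say, tracking the $(i,j,k)$-shifts and the change in $(w,s)$ under resolution.
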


Next we recall several structural results for HOMFLY homology, which assures the finiteness of our algorithm. The following is a categorified version of the \textit{Morton-Franks-Williams Inequality} \cite{Mor86, FW87}.
\begin{proposition}[{\cite{Kho07, Wu08}}] \label{prop:MFW-ineq}
    For a link $L$, its HOMFLY homology $\H^{*, j, *}(L)$ is supported in
    \[
        j \in [w - b + 1,\ w + b - 1].
    \]
\end{proposition}

The following is a categorified version of the symmetry of the HOMFLY polynomial $P_K(q,a) = P_K(-q^{-1}, a)$.

\begin{proposition}[{\cite{DGR,OR20,gorsky2021:tautological}}] \label{prop:q-sym}
    For a knot $K$, we have
    \[
        \H^{i, j, k}(K) \isom \H^{-i, j, k + 2i}(K).
    \]
\end{proposition}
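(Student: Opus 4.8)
The plan is to lift the classical symmetry $P_K(q,a) = P_K(-q^{-1},a)$ to the chain level. Decategorified, the symmetry is immediate: under $q \mapsto -q^{-1}$ the skein coefficient $q^{-1}-q$ is fixed while $a$ is untouched, so the invariant defined by the skein relation together with $P(\text{unknot})=1$ is automatically preserved; and this classical identity is exactly what the desired isomorphism specializes to at $t=-1$. The real task is therefore to produce a homological mechanism realizing $\Hbar^{i,j,k}(K)\cong\Hbar^{-i,j,k+2i}(K)$ that refines this triviality.

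A first attempt is to dualize: taking $R$-linear duals of $C(D)$ reverses all three gradings and, crossing by crossing, interchanges the positive-crossing square of \Cref{complex} with a shift of the negative-crossing square. Since a finite-dimensional graded $\QQ$-space is abstractly isomorphic to its graded dual, this does yield a symmetry of homology dimensions---but it is the \emph{wrong} one on two counts. First, the symmetric partner produced this way is the mirror $m(K)$ (obtained by reversing every crossing), an external operation, rather than $K$ itself; and second, the induced grading involution flips the homological gradings $j,k$, whereas the claimed isomorphism must fix $j$ and reverse only the internal $q$-grading $i$. Thus no formal self-duality of $C(D)$ can give the statement, and this is precisely why the result remained conjectural for so long.

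The approach I would actually pursue passes to the Soergel-bimodule model. Writing $K=\hat\beta$ for a braid $\beta$ on $n$ strands, $\Hbar(K)$ is computed as the Hochschild homology $\bigoplus_j HH^j(F(\beta))$ of the Rouquier complex $F(\beta)$, a complex of Soergel bimodules over $R=\QQ[x_1,\dots,x_n]$; here $j$ is the Hochschild degree, $i$ the internal polynomial grading, and $k$ comes from the homological grading of $F(\beta)$. The goal is to exhibit, \emph{within each fixed Hochschild degree $j$}, a symmetry of the bigraded pieces indexed by $(i,k)$. Each indecomposable Soergel bimodule is self-dual up to an explicit internal shift and carries a Frobenius structure; I would install the associated $R$-action together with its Koszul-dual system of odd ``tautological'' classes on $HH^\bullet(F(\beta))$, and show that the resulting pairing intertwines the two and forces the involution $i\mapsto -i$, the compensating shift $k\mapsto k+2i$ being the internal degree of the top tautological class. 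This is the strategy of \cite{gorsky2021:tautological}; \cite{OR20} obtains the same symmetry through the matrix-factorization incarnation, and \cite{DGR} is the source of the original conjecture.

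The main obstacle is exactly the mismatch flagged above: the dualities one gets for free---Poincar\'e duality of the exterior Koszul complex computing $HH^\bullet$, or Serre duality for the smooth algebra $R$---all act on the \emph{homological} grading $j$ (equivalently the $a$-grading), not on the internal grading $i$. Manufacturing a symmetry that fixes $j$ while reversing $i$ is a genuinely new phenomenon and cannot come from a term-by-term duality of the bicomplex; it requires the tautological-class structure to be compatible simultaneously with both the Rouquier (horizontal) differential and the Koszul (Hochschild) differential. The most delicate bookkeeping is the shift $k\mapsto k+2i$, which couples the internal grading to the vertical homological grading and is the precise point at which the naive ``dualize the complex'' heuristic breaks down.
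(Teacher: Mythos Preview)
The paper does not supply its own proof of this proposition: it is stated with citations to \cite{DGR,OR20,gorsky2021:tautological} and no argument is given. There is therefore nothing in the paper to compare your attempt against beyond those references.

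Your write-up is not a proof either, and you acknowledge as much: the first two paragraphs correctly explain why the decategorified statement is elementary and why naive chain-level dualization produces only the mirror symmetry of \Cref{prop:duality} rather than the desired $q$-symmetry; the remaining paragraphs sketch the Soergel/Hochschild framework and point to the tautological-class mechanism of \cite{gorsky2021:tautological} (and the matrix-factorization route of \cite{OR20}) without carrying out the argument. That is an accurate summary of where the proof actually lives and of the genuine difficulty (the symmetry must fix the Hochschild/$a$-grading $j$ while inverting the internal grading $i$, which no formal self-duality of the bicomplex provides), but it remains a roadmap rather than a proof. If the intent is to supply a self-contained argument, you would need to actually construct the action of the polynomial and exterior ``tautological'' generators on $HH^\bullet(F(\beta))$, verify compatibility with both differentials, and extract the grading involution; short of that, citing the references---as the paper does---is the honest option.
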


We also have the duality with respect to mirroring.

\begin{proposition}[\cite{EMAK}] \label{prop:duality}
    For a knot $K$ and its mirror $m(K)$,
    \[
        \H^{i, j, k}(m(K)) \isom \H^{-i, -j, -k}(K).
    \]
\end{proposition}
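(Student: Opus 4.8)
The plan is to realize mirroring as $R$-linear duality of the chain complex $C(D)$, and then to push this duality through the two successive homology computations using graded duality over the ring $R(D)$. Two structural facts set this up. First, mirroring a diagram only interchanges the over- and under-strands at each crossing while leaving every edge orientation — and hence all incoming/outgoing data — intact; therefore $\theta$ and each $\rho_p$ are unchanged, so $R(m(D)) = R(D)$, and under the evident bijection of edges the variables $x_e$ correspond directly. Second, since $I(D)$ is generated by the \emph{linear} forms $\theta, \rho_p$, whose common zero locus is spanned by the Seifert-circle indicators modulo $\theta$, the ring $R = R(D)$ is a polynomial ring in $s-1$ variables of $q$-degree $2$; in particular $R$ is regular of global dimension $s-1$, with canonical module $\omega_R \isom R\{-2(s-1),0,0\}$.

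Next I would verify the duality crossing by crossing. Inspecting \Cref{complex}, the square $C_p(D)$ for a negative crossing is obtained from the square for a positive crossing by reversing all arrows, keeping the same multiplication maps, and negating all three gradings. The labels $a,b,c,d$ are \emph{not} permuted under mirroring, because the strand pairing $c \to b$, $d \to a$ is preserved (only over/under is swapped), so the maps $x_a - x_c$, $x_b - x_c$, and $(x_a-x_c)(x_b-x_c)$ match on the nose. A direct check of the four modules and four maps then gives $C_p(m(D)) \isom \Hom_R(C_p(D), R)\{0,-2,0\}$. Since $C(D) = \bigotimes_p C_p(D)$ is a tensor product over $R$ of finitely generated free complexes, $\Hom_R(-,R)$ commutes with the tensor product and the shifts add, yielding
\[
    C(m(D)) \isom \Hom_R(C(D), R)\{0,-2n,0\},
\]
where $n$ is the number of crossings and the dual differentials $d_H^\vee, d_V^\vee$ are the transposes of $d_H, d_V$.

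The main work, and the principal obstacle, is to transfer this chain-level duality to $\Hhat$, i.e.\ to commute $\Hom_R(-,R)$ past the iterated homology $H(H(-,d_H),(d_V)_*)$. The difficulty is that $R$-linear duality naturally produces $\Ext_R^\bullet(-,R)$ rather than a plain dual, so a universal-coefficients contribution must be controlled at each of the two stages. I would resolve this using finiteness: for a knot $\Hbar(K)$, and hence $\Hhat(D)$, is finite-dimensional over $\QQ$, so by graded local duality over the regular ring $R$ the relevant $\Ext_R^\bullet(-,R)$ collapses onto top cohomological degree $s-1$ and is identified with the $\QQ$-linear (Matlis) dual twisted by $\omega_R$. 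Carrying this through the double complex should produce an isomorphism of $\QQ$-vector spaces $\Hhat(m(D))^{i,j,k} \isom \Hhat(D)^{-i+Q,\,-j+S,\,-k+V}$, where the $q$-component is $Q = -2(s-1)$ coming from $\omega_R$, and $(S,V)$ are fixed by $\{0,-2n,0\}$ together with the homological dimension $s-1$. The care needed here — distributing the single top-degree $\Ext$-term correctly between the horizontal and vertical gradings, for instance by comparing the two spectral sequences of the dual double complex, or by first establishing enough flatness of the intermediate horizontal homology so that the $\Ext$ genuinely sits in one place — is where I expect to spend the most effort.

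Finally I would assemble the shifts. Combining the chain-level shift $\{0,-2n,0\}$, the duality twist from $\omega_R$ and the dimension $s-1$, and the normalization shift $\{-w+s-1,\,w+s-1,\,w-s+1\}$ of \eqref{eq:Hbar-def} — using that $m(D)$ has writhe $-w$ and the same number $s$ of Seifert circles — all contributions telescope, and passing to the $\QQ$-dual is harmless since the spaces are finite-dimensional. The outcome is the claimed isomorphism of triply-graded vector spaces $\Hbar^{i,j,k}(m(K)) \isom \Hbar^{-i,-j,-k}(K)$, consistent with the transformation of the HOMFLY polynomial under mirroring.
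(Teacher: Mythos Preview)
Your approach diverges from the paper's and contains a concrete error that undermines the grading bookkeeping.

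The paper does not rebuild duality from Rasmussen's cubes. It imports the duality statement directly from \cite{EMAK} (their Corollary~1.12) and then passes to the present setting via a K\"unneth universal-coefficient spectral sequence. The single input that makes this spectral sequence collapse at $E_2$ is the structural fact that every variable of the reduced base polynomial ring acts as zero on $\Hbar(K)$ when $K$ is a knot --- an analogue of \cite[Lemma~5.16]{Ras15}. No canonical module, no graded local duality, and no shift-telescoping are needed.

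Your route has two problems. First, a factual error: by \Cref{dimlem}, $R(D)$ is a polynomial ring in $n$ variables (the number of crossings), not $s-1$. There are $2n$ edge variables, and $\theta,\rho_p$ span an $n$-dimensional space of relations (the unique dependency is $\sum_p\rho_p=0$), so the quotient has Krull dimension $n$. The Seifert-circle indicators do lie in the common zero locus of the $\rho_p$, but they do not span it except in the degenerate case $n=s-1$; already for the trefoil braid closure one has $n=3$, $s=2$. This feeds directly into your $\omega_R$-twist and your top $\Ext$-degree, so the claim that ``all contributions telescope'' is unsupported as written. Second, the step you yourself flag as hardest --- commuting $\Hom_R(-,R)$ past the \emph{iterated} homology and placing the top $\Ext_R$ contribution correctly between the horizontal and vertical gradings --- is only sketched; finite-dimensionality of $\Hbar(K)$ over $\QQ$ alone does not pin down where that contribution lands, and neither spectral-sequence comparison you mention is shown to degenerate. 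The paper's ``variables act as zero'' lemma is exactly the device that short-circuits this difficulty.
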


Now we obtain a categorified version of the \textit{Morton bound} \cite{Mor86}.

\begin{proposition} \label{prop:morton}
    For a knot $K$, its HOMFLY homology $\H^{i, *, k}(K)$ is supported in
    \[
        i, k \in [-n + b - 1,\ n - b + 1].
    \]
\end{proposition}

\begin{proof}
    Take any diagram $D$ of $K$ and let $n^-$ be the number of negative crossings of $D$. From \Cref{complex}, we see that the lowest $q$-degree of the non-trivial subcomplex of $C(D)$ is $-2n^-$. Adding to it the $q$-grading shift $-w + b - 1$ gives the lower bound for $i$. The upper bound follows by symmetry. Similarly, the upper bound for $k$ can be seen from from \Cref{complex}, and the lower bound follows by duality. 
\end{proof}
    \section{Algorithm} \label{sec:algo}

The idea of the algorithm is similar to the one given in \cite{sano-sato:2020}. Suppose $F$ is a field, $R$ a multivariate polynomial ring over $F$, and $C$ a freely and finitely generated chain complex over $R$ whose differential preserves the polynomial grading. We may regard $C$ as a chain complex over $F$ and decompose it into (infinitely many) finitely generated homogeneous subcomplexes. Provided that these subcomplexes are acyclic outside some computable bound, the homology group $H(C)$ as a graded $F$-vector space can be fully computed algorithmically.

\subsection{Free variables of the edge ring}

Let $D$ be an oriented knot diagram with $n$ crossings.

\begin{lemma}[{\cite[Lemma 2.4]{Ras15}}]\label{dimlem}
    The edge ring $R(D)$ is isomorphic to a polynomial ring with $n$ variables.
\end{lemma}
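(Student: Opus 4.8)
The plan is to exploit that $I(D)$ is generated entirely by \emph{linear} forms (forms of polynomial-degree one, i.e.\ $q$-degree $2$) in the $2n$ variables of $R'(D)$. Quotienting a polynomial ring by an ideal generated by linearly independent linear forms again yields a polynomial ring, whose number of variables equals the original number minus the dimension of the span of those forms; a linear change of coordinates turns a basis of the span into coordinate functions. So the statement reduces to a single linear-algebra computation: writing $V = \Span_{\QQ}(\theta, \{\rho_p\}_p)$ inside the space of linear forms, I must show $\dim_{\QQ} V = n$. Indeed, $G(D)$ is $4$-valent with $n$ vertices, so the handshake count gives $2 \cdot \#E(G(D)) = 4n$, i.e.\ $R'(D)$ has $2n$ generators, and then the number of surviving variables is $2n - n = n$.

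To compute $\dim_{\QQ} V$, I would first treat the span of the crossing relations. Viewing each $\rho_p$ as a vector in $\QQ^{E}$ with the standard basis indexed by edges, the key observation is that every edge $e$ occurs exactly once as an outgoing edge (at the crossing where it starts) and exactly once as an incoming edge (at the crossing where it ends); hence $\sum_p \rho_p = 0$ identically. This produces one linear relation, so $\dim_{\QQ}\Span\{\rho_p\} \le n-1$. The main step is to show this is the \emph{only} relation, and here the connectivity hypothesis on $D$ enters. Consider the map $\partial \colon \QQ^{P} \to \QQ^{E}$ sending the basis vector $[p]$ to $\rho_p$, so that $\Span\{\rho_p\} = \Ima \partial$. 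If $\sum_p c_p \rho_p = 0$, then for each edge $e$ running from a crossing $p_1$ (as outgoing) to a crossing $p_2$ (as incoming), the coefficient of $x_e$ is $c_{p_1} - c_{p_2}$, which must vanish; thus $c$ is constant along every edge of $G(D)$. Since the vertices of $G(D)$ are precisely the crossings and $G(D)$ is connected, $c$ is globally constant, so $\Ker \partial = \QQ \cdot \sum_p [p]$ is one-dimensional and $\rank \partial = n-1$.

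Finally I would verify that adjoining $\theta$ raises the dimension by exactly one, i.e.\ $\theta \notin \Ima\partial$. For this, apply the augmentation $\epsilon \colon \QQ^{E} \to \QQ$ defined by $\epsilon(x_e) = 1$: each $\rho_p$ has coefficient-sum $1 + 1 - 1 - 1 = 0$, so $\Ima\partial \subseteq \Ker\epsilon$, whereas $\epsilon(\theta) = \#E(G(D)) = 2n \ne 0$. Hence $\theta \notin \Ima\partial$ and $\dim_{\QQ} V = (n-1) + 1 = n$. Combining this with the reduction of the first paragraph, and noting that the ideal generated by $\theta$ together with all $\rho_p$ coincides with the ideal generated by any $\QQ$-basis of $V$, I conclude $R(D) \isom \QQ[y_1, \dots, y_n]$.

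I expect the only point requiring care to be the rank computation in the second paragraph, where connectivity of $G(D)$ is genuinely used to rule out extra relations among the $\rho_p$; the edge count and the augmentation argument are routine.
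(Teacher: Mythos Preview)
Your proof is correct and follows essentially the same approach as the paper's, which simply asserts that the $1+n$ linear generators of $I(D)$ satisfy the unique relation $\sum_p \rho_p = 0$ by connectivity. You have filled in the details the paper omits---the rank computation for $\Span\{\rho_p\}$ via the connectivity argument and the verification that $\theta$ is independent of the $\rho_p$ via the augmentation---but the underlying strategy is identical.
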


\begin{proof}
    Recall that $R'(D)$ is generated by $2n$ indeterminates, and that the ideal $I(D)$ is generated by $n + 1$ linear polynomials. From the assumption that $D$ is connected, these polynomials satisfy a unique linear relation $\sum_c \rho(c) = 0$ .
\end{proof}

\begin{figure}
    \centering
    \tikzset{every picture/.style={line width=0.75pt}} 

\begin{tikzpicture}[x=0.75pt,y=0.75pt,yscale=-1,xscale=1]

\draw [color={rgb, 255:red, 128; green, 128; blue, 128 }  ,draw opacity=1 ][line width=1.5]    (39.62,19.12) -- (107.5,87) ;
\draw [shift={(37.5,17)}, rotate = 45] [color={rgb, 255:red, 128; green, 128; blue, 128 }  ,draw opacity=1 ][line width=1.5]    (14.21,-6.37) .. controls (9.04,-2.99) and (4.3,-0.87) .. (0,0) .. controls (4.3,0.87) and (9.04,2.99) .. (14.21,6.37)   ;
\draw [color={rgb, 255:red, 0; green, 0; blue, 0 }  ,draw opacity=1 ][line width=1.5]    (105.86,19.11) -- (37,87) ;
\draw [shift={(108,17)}, rotate = 135.41] [color={rgb, 255:red, 0; green, 0; blue, 0 }  ,draw opacity=1 ][line width=1.5]    (14.21,-6.37) .. controls (9.04,-2.99) and (4.3,-0.87) .. (0,0) .. controls (4.3,0.87) and (9.04,2.99) .. (14.21,6.37)   ;
\draw [color={rgb, 255:red, 0; green, 0; blue, 0 }  ,draw opacity=1 ][line width=1.5]    (203,87) .. controls (242.2,51.72) and (238.18,49.09) .. (205.53,18.88) ;
\draw [shift={(203.5,17)}, rotate = 402.85] [color={rgb, 255:red, 0; green, 0; blue, 0 }  ,draw opacity=1 ][line width=1.5]    (14.21,-6.37) .. controls (9.04,-2.99) and (4.3,-0.87) .. (0,0) .. controls (4.3,0.87) and (9.04,2.99) .. (14.21,6.37)   ;
\draw [color={rgb, 255:red, 128; green, 128; blue, 128 }  ,draw opacity=1 ][line width=1.5]    (274,89) .. controls (236.76,52.74) and (239.86,48.17) .. (272.47,20.71) ;
\draw [shift={(274.5,19)}, rotate = 499.95] [color={rgb, 255:red, 128; green, 128; blue, 128 }  ,draw opacity=1 ][line width=1.5]    (14.21,-6.37) .. controls (9.04,-2.99) and (4.3,-0.87) .. (0,0) .. controls (4.3,0.87) and (9.04,2.99) .. (14.21,6.37)   ;

\draw (23,9.4) node [anchor=north west][inner sep=0.75pt]    {$k$};
\draw (116,12.4) node [anchor=north west][inner sep=0.75pt]    {$l$};
\draw (22,77.4) node [anchor=north west][inner sep=0.75pt]    {$i$};
\draw (115,80.4) node [anchor=north west][inner sep=0.75pt]    {$j$};
\draw (87,43) node [anchor=north west][inner sep=0.75pt]  [color={rgb, 255:red, 0; green, 0; blue, 0 }  ,opacity=1 ]  {$x_c$};
\draw (34,40) node [anchor=north west][inner sep=0.75pt]  [color={rgb, 255:red, 128; green, 128; blue, 128 }  ,opacity=1 ]  {$-x_c$};
\draw (188,8.4) node [anchor=north west][inner sep=0.75pt]    {$k$};
\draw (278,11.4) node [anchor=north west][inner sep=0.75pt]    {$l$};
\draw (187,76.4) node [anchor=north west][inner sep=0.75pt]    {$i$};
\draw (277,79.4) node [anchor=north west][inner sep=0.75pt]    {$j$};
\draw (204,42) node [anchor=north west][inner sep=0.75pt]  [color={rgb, 255:red, 0; green, 0; blue, 0 }  ,opacity=1 ]  {$y_c$};
\draw (255,42) node [anchor=north west][inner sep=0.75pt]  [color={rgb, 255:red, 128; green, 128; blue, 128 }  ,opacity=1 ]  {$-y_c$};

\end{tikzpicture}
    \caption{Generators $x_c$ and $y_c$ of $R(D)$.}
    \label{fig:crossing}
\end{figure}

To each crossing $c$ of $D$, associate it an element $x_c \in R(D)$ by
\[
    x_c = X_l - X_i = -(X_k - X_j)
\]
where the indices $i, j, k, l$ are taken as in \Cref{fig:crossing}. Here we arbitrarily label the crossings as $c_1, \ldots, c_n$ and rewrite $x_{c_i}$ as $x_i$.

\begin{lemma} \label{lem:R-description}
    $R(D) = \QQ[x_1, \ldots, x_n].$
\end{lemma}

\begin{proof}
    Since we are working over $\QQ$, we may rewrite $R'(D)$ as 
    \[
        R'(D) = \QQ[X_1 - X_2,\ X_2 - X_3,\ \ldots,\ X_{2n-1} - X_{2n},\ \theta].
    \]
    Thus one sees that $R(D) = R'(D)/I(D)$ is generated by elements of the form $X_i - X_{i + 1}$ where $1 \leq i \leq 2n - 1$. It suffices to show that each $X_i - X_{i + 1}$ can be written as a linear combination of $x_j$'s. Since $D$ is a knot diagram, there is a unique oriented path $\gamma$ composed of edges in $G(D)$ that goes from $e_i$ to $e_{i + 1}$. Trace $\gamma$ from the start to the end, and every time $\gamma$ passes a crossing $c_j$ take a term $x_j$ or $-x_j$ as in \Cref{fig:crossing} according to how $\gamma$ passes $c_j$. It is obvious that the sum of these terms give $X_i - X_{i + 1}$ in $R(D)$.
\end{proof}

\begin{remark}
    For a link diagram $D$, a similar description of $R(D)$ can be obtained as follows. First to each crossing $c$ of $D$ associate it another element $y_c \in R(D)$ by
    \[
        y_c = X_k - X_i = -(X_l - X_j).
    \]
    Take a spanning tree $T$ of $G(D)$, and let $I$ the set of indices that correspond bijectively to the edges of $T$. Then
    \[
        R(D) = \QQ[x_i, y_j]_{i \in I, j \notin I}.
    \]
    After resolving all crossings which are not in $I$, we obtain an unknot diagram, and then the proof proceeds similarly. Although this procedure is not necessary for knot diagrams, it does contribute to reducing the computational cost when \textit{exclusion of variables} is applied (to be explained in \Cref{subsec:excl}). This is because the factor $X_k - X_i$ in the horizontal arrow of \Cref{fig:crossing} is replaced with a single term $y_c$. 
\end{remark}

\subsection{Slicing $C(D)$ by the $q$-grading} \label{subsec:double-cube-cpx}

Now we rewrite the definition of $\H(D)$ for the purpose of computation. First for each crossing $c_i$ of $D$, write the complex $C(D_{c_i})$ as 
\[ 
    \begin{tikzcd}
    R^{(i)}_{01} \arrow[r]           & R^{(i)}_{11}           \\
    R^{(i)}_{00} \arrow[r] \arrow[u] & R^{(i)}_{10} \arrow[u]
    \end{tikzcd} 
\]
To each pair of vertices $(h, v) \in \{0, 1\}^n \times \{0, 1\}^n$, associate a module $R_{h, v}$ by 
\[
    R_{h, v} = R^{(1)}_{h_1, v_1} \otimes \cdots \otimes R^{(n)}_{h_n, v_n}
\]
which is simply a copy of $R(D)$ with the grading shifted accordingly. Then $C(D)$ can be written as 
\[
    C(D) = \bigoplus_{h, v} R_{h, v}.
\]
For each $v \in \{0, 1\}^n$, the \textit{horizontal complex} $C_H(D, v)$ at $v$ is the subcomplex of $C(D)$ given by 
\[
    C_{H}(D, v) = \bigoplus_{h} R_{h, v}
\]
equipped with the horizontal differential $d_H$ that consists of horizontal arrows of the form $R_{h, v} \rightarrow R_{h', v}$ with $|h'| = |h| + 1$. Here we view each horizontal complex as an $n$-dimensional \textit{cube complex}, which is familiar in the construction of the Khovanov complex \cite{Kho00}. The \textit{horizontal homology} at $v$ is given by
\[
    H(D, v) = H(C_H(D, v), d_H).
\]
Next the \textit{vertical complex} is defined by 
\[
    C_V(D) = \bigoplus_{v} H(D, v),
\]
equipped with the vertical differential $(d_V)_*$ that consists of the induced maps of vertical arrows $H(D, v) \rightarrow H(D, v')$ with $|v'| = |v| + 1$. The vertical complex is again a cube complex with horizontal homologies on its vertices. Then  $\H(D)$ is given by the homology
\[
    \H(D) = H(C_V(D), d_V).
\]

Now obviously $C(D)$ is infinitely generated as a $\QQ$-vector space. In order to make possible the computation of $\H(D)$, we slice the complex $C(D)$ into homogeneous $q$-grading parts, each of which is finitely generated by monomials in $x_1, \ldots, x_n$. The homology of each of these slices can be computed directly by standard methods. From \Cref{prop:morton,prop:MFW-ineq} we know that $\H(D)$ is supported in a bounded region of $\ZZ^3$, hence the total computation is assured to be finite.

\subsection{Exclusion of variables}
\label{subsec:excl}

Although the computation is assured to be finite, the number of generators of the subcomplexes explodes as the level of the $q$-grading slice increases. This makes actual computations infeasible. In order to reduce the computational cost, we use the process called ``exclusion of variables" as described in \cite{Ras15}. 

First fix any $v \in \{0, 1\}^n$ and focus on the horizontal complex $C = C_H(D, v)$ at $v$. The differential $d = d_H$ is described by an $n$-tuple of polynomials $(f_1, \ldots, f_n)$. Take one such $f = f_i$. From the explicit description given in \Cref{fig:crossing}, the polynomial $f$ is monic and is either linear or quadratic with respect to some variable $x_k$. The complementary parts form an $n-1$ dimensional cube complex $C'$ with differential $d'$. The complex $C$ may be viewed as the mapping cone of the endomorphism 
\[
    C' \xrightarrow{f} C',
\]
and the differential can be written as
\[
    d(x_0, x_1) = (-d'x_0,\ fx_0 + d'x_1).
\]

Now let $R_1 = R/(f)$ be the quotient ring, and define $C'' = C' \otimes_R R_1$. In \cite[Lemma 3.8]{Ras15} it is proved that $C''$ is a deformation retract of $C$. Explicitly, let $\pi \colon R \rightarrow R_1$ be the quotient map, and $\iota \colon R_1 \rightarrow R$ be the map that sends any residue class $[g] \in R_1$ to the remainder of $g$ by $f$ with respect to the variable $x_k$. The chain homotopy equivalences $\phi \colon C \rightarrow C''$ and $\psi \colon C'' \rightarrow C$ are given by
\[
    \phi(x_0, x_1) = \pi(x_1),
\]
and
\[
    \psi(y) = \left(\frac{\iota(d''y) - d'\iota(y)}{f},\  \iota(y)\right).
\]

The effect of this process is that half of the cube is discarded, and moreover when $f$ is linear generators containing $x_k$ is dropped, when $f$ is quadratic generators containing $x_k^2$ is dropped. Furthermore, this process can be repeated for other directions as long as the target variable is algebraically independent in the quotient ring.

Having performed the reduction for all of the horizontal complexes, the vertices of the vertical complex can be computed from $H(C''_H(D, v), d''_H)$, and the vertical differential is given by induced map of the composite chain maps,
\[
    C''_H(D, v) \xrightarrow{\psi_v} C_H(D, v) \xrightarrow{d_V} C_H(D, v') \xrightarrow{\phi_{v'}} C''_H(D, v').
\] 

%
    \section{Computations} \label{sec:comptuations}

\begin{table}[t]
    \centering
    \begin{tabular}{r|rrrrrrr}
    $n \setminus l$	& $\leq 11$	& $12$ & $13$ & $14$ & $15$ & $16$ & $17$\\
    \hline
    $\leq 8$ & $35$ \\
    $9$ & $43$ & $5$ & & $1$ \\
    $10$ & $126$ & $31$ & $2$ & $6$ \\
    $11$ & $237$ & $135$ & $74$ & $81$ & $14$ & $9$ & $2$\\
    \end{tabular}
    \caption{Number of knots\ ($n$: crossing number, $l$: braid length)}
    \label{table:targets}
\end{table}

The implementation of the algorithm is available at \cite{kr-calc}, together with all of the computation results in both human-readble and machine-friendly formats. The program takes a braid representation of a knot as an input, and computes its reduced HOMFLY homology. For example, the following command runs the computation for the positive trefoil knot $3_1$, represented by the braid $b = \sigma_1\sigma_1\sigma_1$. 

\begin{verbatim}
$ yui-kr 3_1 -b [1,1,1]

Delta: 2
 j\i  -2  0  2 
 4    .   1  . 
 2    1   .  1 
\end{verbatim}

Here the output is expressed in the $\Delta$-sliced form, which was introduced in \cite{CG22}. The third grading $k$ can be read by $k = \Delta - i - j$, and each number represent the dimension of $\H^{i, j, k}$. This form has the advantage that the three-dimensional array can be expressed in a compact form, where also the symmetry (\Cref{prop:q-sym}) and KR-thinness becomes apparent. The above result reads
\[
    \H(3_1) = \QQ\{-2, 2, 2\} \oplus \QQ\{0, 4, -2\} \oplus \QQ\{2, 2, -2\}.
\]

The computational cost primarily depends on braid length $l$ of the input knot. \Cref{table:targets} shows the number of prime knots classified by the crossing number $n$ and the braid length $l$. Our program is capable of computing the results for knots with braid length $l \leq 13$ on an ordinary desktop computer. For higher $l$, we will need a high performance computer with a large amount of memory. Using the braid representatives provided in \textit{KnotInfo Database} as the input data, we run the program on a $64$-core $3$-TB RAM machine and succeeded to compute the homology for all prime knots in \Cref{table:targets}, except for one knot $11a_{280}\ (l = 17)$.

\begin{figure}[t]
    \centering
    \small
    \begin{verbatim}
Delta: -6     Delta: -4     Delta: -2           Delta: 0
 j\i  0        j\i  0        j\i  -2  0  2       j\i  -4  -2  0   2  4 
 2    ?        4    ?        4    .   ?  .       4    .   1   ?   1  . 
               2    ?        2    ?   ?  ?       2    1   ?   ?   ?  1 
               0    ?        0    .   ?  .       0    3   9   ?   9  3 
                                                 -2   2   8   11  8  2 
                                                 -4   .   3   5   3  . 
                                                 -6   .   .   1   .  . 


Delta: 2           Delta: 4           Delta: 6    Delta: 8    Delta: 10
 j\i  -2  0  2      j\i  -2  0  2     j\i  0       j\i  0      j\i  0 
 4    .   ?  .      4    .   ?  .     4    ?       4    ?      4    ? 
 2    ?   ?  ?      2    ?   ?  ?     2    ?       2    ?      2    ?
 0    .   ?  .      0    .   ?  .     0    ?   
    \end{verbatim}
    \vspace{-1.5em}
    \caption{Partial computation result for $11a_{280}$.}
    \label{fig:11a_280}
\end{figure}

For $11a_{280}$, we determined its homology by combining the partially computed result with known results. First, \Cref{fig:11a_280} shows the partial computation result for $11a_{280}$, where \verb|?| indicates the places where computations were skipped. Now recall the following result.

\begin{proposition}[{\cite[Theorem 2]{Ras15}}]
    For each $N > 0$, there is a spectral sequence $\{(E_p(N), d_p(N))\}_{p \geq 1}$ which starts at $E_1(N) = \H(K)$ and converges to the $sl(N)$-homology $H_{sl(N)}(K)$. Moreover the $p$-th differential $d_p(N)$ is homogeneous with degree $(2Np, -2p, 2 - 2p)$, and the bigrading $(t, q)$ on $H_{sl(N)}(K)$ and the triple grading $(i, j, k)$ on $E_\infty(N)$ correspond as $(t, q) = ((k - j)/2,\ i + Nj)$. 
\end{proposition}

In $\Delta$-grading, the $p$-th differential has degree $2(N - 2)p + 2$. Now for $K = 11a_{280}$, we see from \Cref{fig:11a_280} that when $N \geq 3$ all differentials $d_p(N)$ are trivial by degree reasons and hence $\H(K) \cong H_{sl(N)}(K)$. Lewark \cite{Lew13} has computed the $sl(3)$-homology for all prime knots with up to $12$ crossings, and the results show that $H_{sl(3)}(11a_{280})$ is homologically thin. Thus $\H(K)$ is thin, and all \verb|?| outside $\Delta = 0$ are determined to be trivial. The remaining five \verb|?| in $\Delta = 0$ can be filled from the HOMFLY polynomial of $K$.

Obviously this consideration can also be automated, and would generally improve the computation speed by determining trivial parts as early as possible. We leave it as a future work. 
    \section{Observations}\label{sec:observations}

Here we give more observations on the computational results. The following results show that the reduced HOMFLY homology is strictly stronger than the HOMFLY polynomial, as already known for example from \cite{Kawamuro}. 

\begin{proposition}
    $\{5_1, m(10_{132})\}$ and $\{11n_{79}, m(11n_{138})\}$ are pairs such that the two knots have identical HOMFLY polynomial but distinct HOMFLY homology. These are the only such pairs within prime knots with up to $11$ crossings. 
\end{proposition}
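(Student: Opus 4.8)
The plan is to establish both assertions by direct computation, since the statement is of a computational nature. The equality of HOMFLY polynomials within each pair is classical: for $\{5_1, m(10_{132})\}$ it is already recorded in \Cref{sec:intro} and can be read off from the \textit{KnotInfo Database} \cite{knotinfo}, and the same source gives $P(11n_{79}) = P(m(11n_{138}))$. Alternatively, both equalities follow a posteriori from our own output via the Euler characteristic relation $P(L)(q,a) = \mathcal{P}(L)(q,a,-1)$: once the triply graded homologies are computed, specializing to $t = -1$ collapses each Poincar\'e series to a Laurent polynomial in $q, a$, and one checks that the two members of each pair yield the same polynomial even though their homologies differ.

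For the distinctness of the reduced HOMFLY homologies, I would feed braid words for the four knots (taken from \cite{knotinfo}) into \Cref{algo:mirror}. Since each knot is presented as a braid closure, \Cref{prop:finiteness} confines the computation to the finite range $l \in [-2n, -n]$ together with the stated bounds on $(i, j, k)$, so the algorithm terminates and returns the complete graded dimensions $h^{i,j,k} = \dim \Hbar^{i,j,k}$. For the pair $\{5_1, m(10_{132})\}$ the resulting tables are displayed in \Cref{table:compare}; a glance shows that they differ — for instance the total dimensions are $5$ and $11$ — so $\Hbar(5_1) \not\isom \Hbar(m(10_{132}))$ as triply graded vector spaces. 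For $\{11n_{79}, m(11n_{138})\}$ I would likewise tabulate $h^{i,j,k}$ for both knots and exhibit a triple degree at which the dimensions disagree; the full tables are available at \cite{kr-calc}.

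There is no genuine mathematical obstacle here, as the finiteness and computability are already secured by \Cref{prop:finiteness}. The only points demanding care are bookkeeping ones: verifying that the chosen braid words indeed represent the named knots and the asserted mirrors (so that \Cref{algo:mirror} is applied to the correct diagram), and correctly converting between the internal $(l, \alpha, \beta)$ indexing and the triple grading $(i, j, k)$ via \eqref{eq:Hijk} when assembling the comparison tables. The substantive observation — and the reason the statement is worth recording — is precisely that the graded Euler characteristics coincide while the homologies do not, so that each pair is a concrete witness to the strict refinement of the HOMFLY polynomial by the reduced HOMFLY homology.
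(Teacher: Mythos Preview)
Your proposal is correct and matches the paper's own treatment: the proposition is stated as a computational result with no written proof, the evidence being exactly the algorithmic output you describe (with \Cref{table:compare} already exhibiting the $\{5_1, m(10_{132})\}$ case and the remaining data referred to \cite{kr-calc}).
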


The first pair appears in \cite{BarNatan:2002} such that the two knots have identical Jones polynomial but distinct Khovanov homology. The second pair also satisfies this property. 

\begin{proposition}
    $9_{42}, 10_{125}, 11n_{24}, 11n_{82}$ are knots $K$ such that $K$ and $m(K)$ have identical HOMFLY polynomial but distinct reduced HOMFLY homology. These are the only knots with up to $11$ crossings satisfying this property. 
\end{proposition}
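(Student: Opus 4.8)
The plan is to reduce both assertions to a single explicit computation of the triply-graded groups $\Hbar^{i,j,k}(K)$ for each of the four knots, carried out by \Cref{algo:basic} (or its mirror-optimized form \Cref{algo:mirror}) on the braid-closure diagrams supplied by the KnotInfo database, and then to read off the two claims from the resulting Poincar\'e series $\mathcal{P}(K)(q,a,t)$.

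First I would record the effect of mirroring at the level of the Poincar\'e series. Combining \Cref{prop:duality}, which gives $\Hbar^{i,j,k}(m(K)) \isom \Hbar^{-i,-j,-k}(K)$, with the definition of $\mathcal{P}$ and the reindexing $(i,j,k) \mapsto (-i,-j,-k)$, one obtains
\[
    \mathcal{P}(m(K))(q,a,t) = \mathcal{P}(K)(q^{-1}, a^{-1}, t^{-1}).
\]
Setting $t = -1$ and using $P(L) = \mathcal{P}(L)(q,a,-1)$ yields $P(m(K))(q,a) = P(K)(q^{-1},a^{-1})$. Thus both stated claims become statements purely about the computed series of $K$: the knots $K$ and $m(K)$ have identical HOMFLY polynomial precisely when $P(K)$ is self-conjugate, i.e.\ $P(K)(q,a) = P(K)(q^{-1},a^{-1})$, and they have distinct reduced HOMFLY homology precisely when $\mathcal{P}(K)(q,a,t) \neq \mathcal{P}(K)(q^{-1},a^{-1},t^{-1})$.

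For each of $9_{42}, 10_{125}, 11n_{24}, 11n_{82}$ I would then run the algorithm to obtain the full series $\mathcal{P}(K)$; finiteness and correctness are guaranteed by \Cref{prop:finiteness} and the $q$-degree slicing, so the computation is exact. From this data I would verify (i) that $P(K)$, recovered as the Euler characteristic $\mathcal{P}(K)(q,a,-1)$, is invariant under $(q,a) \mapsto (q^{-1}, a^{-1})$, which by the displayed formula gives $P(K) = P(m(K))$; and (ii) that $\mathcal{P}(K)$ itself fails to be invariant under $(q,a,t) \mapsto (q^{-1},a^{-1},t^{-1})$, so that some triple degree $(i,j,k)$ satisfies $\dim\Hbar^{i,j,k}(K) \neq \dim\Hbar^{-i,-j,-k}(K)$, whence $\Hbar(K) \neq \Hbar(m(K))$.

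The content here is the computation rather than a conceptual argument, so the only genuine obstacle is confidence in the output. I would guard against this in two ways: the internal symmetry $\Hbar^{i,j,k}(K) \isom \Hbar^{-i,j,k+2i}(K)$ of \Cref{prop:q-sym} furnishes a nontrivial consistency check that the computed groups must satisfy, and the recovered Euler characteristic can be matched against the tabulated HOMFLY polynomial. Once these checks pass, exhibiting a single asymmetric triple degree in each case completes the proof.
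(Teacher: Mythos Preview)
Your proposal is correct and matches the paper's approach: the statement is presented in \Cref{sec:comptuations} as a direct consequence of running the implemented algorithm on the KnotInfo braid representatives, with no separate argument given. Your reduction via \Cref{prop:duality} to checking (non\nobreakdash-)invariance of $\mathcal{P}(K)$ under $(q,a,t)\mapsto(q^{-1},a^{-1},t^{-1})$ is exactly the right way to phrase what the computation must verify, and the consistency checks you propose (the $q$-symmetry of \Cref{prop:q-sym} and agreement of the Euler characteristic with the tabulated $P(K)$) are sensible safeguards that go slightly beyond what the paper spells out.
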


Contrarily, there are many pairs of knots that cannot be distinguished by HOMFLY homology.

\begin{proposition}
    Each set of knots in the following list have identical HOMFLY homology.
    \[
        \begin{array}{l}
            \{8_{8}, m(10_{129})\}, \{8_{16}, 10_{156}\}, \{10_{25}, 10_{56}\}, \{10_{40}, 10_{103}\}, \{10_{155}, 11n_{37}\}, \\ 
            \{11a_{1}, 11a_{149}\}, \{11a_{2}, 11a_{116}\}, \{11a_{11}, 11a_{167}\}, \{11a_{19}, 11a_{25}\}, \\
            \{11a_{24}, 11a_{26}, 11a_{315}\}, \{11a_{30}, 11a_{272}\}, \{11a_{33}, 11a_{82}\}, \{11a_{35}, 11a_{316}\}, \\
            \{11a_{41}, m(11a_{183})\}, \{11a_{44}, 11a_{47}\}, \{11a_{57}, 11a_{231}\}, \{11a_{71}, m(11a_{248})\}, \\
            \{11a_{76}, m(11a_{160}), m(11a_{289})\}, \{11a_{79}, 11a_{255}\}, \{11a_{81}, m(11a_{282})\}, \\
            \{11a_{104}, 11a_{168}\}, \{11a_{138}, m(11a_{285})\}, \{11a_{185}, m(11a_{265})\}, \\
            \{11a_{186}, 11a_{241}\}, \{11a_{192}, 11a_{299}\}, \{11a_{196}, m(11a_{216})\}, \{11a_{251}, 11a_{253}\}, \\
            \{11a_{252}, 11a_{254}\}, \{11n_{4}, 11n_{21}\}, \{11n_{10}, m(11n_{144})\}, \{11n_{11}, 11n_{112}\}, \\
            \{11n_{22}, m(11n_{127})\}, \{11n_{34}, 11n_{42}\}, \{11n_{35}, 11n_{43}\}, \{11n_{36}, 11n_{44}\}, \\
            \{11n_{39}, 11n_{45}\}, \{11n_{40}, 11n_{46}\}, \{11n_{41}, 11n_{47}\}, \{11n_{50}, 11n_{132}\}, \\
            \{11n_{56}, m(11n_{58})\}, \{11n_{71}, m(11n_{75})\}, \{11n_{73}, 11n_{74}\}, \{11n_{76}, m(11n_{78})\}, \\
            \{11n_{151}, 11n_{152}\}.
        \end{array}
    \]
\end{proposition}

In particular we see that the Conway knot ($11n_{34}$) and the Kinoshita--Terasaka knot ($11n_{42}$) have identical reduced HOMFLY homology. This was proved in \cite{MV08} using spectral sequence arguments. There are also knots whose chirality (i.e. $K \neq m(K)$) cannot be detected by HOMFLY homology.

\begin{proposition}
    The following four knots are chiral, but each of its HOMFLY homology is identical to that of its mirror:  $10_{48}$, $10_{71}$, $10_{91}$, $10_{104}$.
\end{proposition}

Finally we observe how $\H(D)$ may differ from $\H(K)$ when $D$ is a general knot diagram for $K$ (see \Cref{rem:generaldiag}). Below shows the computation result for a minimal crossing diagram for $5_2$.

\begin{verbatim}
Delta: 2
 j\i  -2  0  2 
 8    .   1  . 
 6    1   1  1 
 4    1   2  1 
 2    1   1  1 
 \end{verbatim}

However, the correct homology for $5_2$ is the following, obtained from a braid representation $b$ of length $6$.

\begin{verbatim}
Delta: 2
 j\i  -2  0  2 
 6    .   1  . 
 4    1   1  1 
 2    1   1  1 
\end{verbatim}

There are cases where a diagram $D$ of a knot $K$ gives the correct homology $\H(K) = \H(D)$ even when the number of crossings of $D$ are less than the braid length of $K$. For instance, a minimal crossing diagram of $11n_{39}$ represented by the following PD-code gives the correct result, while the braid length of $11n_{39}$ is $13$.
\begin{small}
\begin{verbatim}
  [[4,2,5,1],[8,4,9,3],[11,17,12,16],[12,5,13,6],[6,13,7,14],
   [17,22,18,1],[9,18,10,19],[21,10,22,11],[15,21,16,20],[19,15,20,14],
   [2,8,3,7]]
\end{verbatim}    
\end{small}
\begin{question} \label{question:refine-H}
    Is it possible to refine the construction of the chain complex $C(D)$ for arbitrary link diagrams so that it's homology gives the correct HOMFLY homology? 
\end{question}
    
    \printbibliography
    \vspace{1.5em}
    \addresses
    
    \newpage
    \newgeometry{margin=2.5cm}

\appendix
\section{Computation Results}\label{sec:results}

Below are the computation results of the reduced HOMFLY homology for KR-thick prime knots with up to $11$-crossings, expressed in the form of Poincar\'e series
\[
    \mathcal{P}(K) = \sum_{i,j,k} t^{(k-j)/2} q^i a^j \dim \H^{i,\,j,\,k}(K).
\]
The braid representatives of prime knots provided in \textit{KnotInfo Database} were used as the input data.

\vspace{1em}

\renewcommand*{\arraystretch}{1.5}
\small
\begin{longtable}{p{.05\textwidth}p{.9\textwidth}}
$8_{19}$ & $t^{-8}a^{10} + t^{-7}q^2a^8 + t^{-7}q^4a^8 + t^{-6}q^6a^6 + t^{-5}q^{-2}a^8 + t^{-5}a^8 + t^{-4}a^6 + t^{-4}q^2a^6 + t^{-3}q^{-4}a^8 + t^{-2}q^{-2}a^6 + q^{-6}a^6$ \\
$9_{42}$ & $t^{-4}q^2a^2 + t^{-3}q^4 + t^{-2}q^{-2}a^2 + 2t^{-1} + 1 + q^2a^{-2} + tq^{-4} + t^2q^{-2}a^{-2}$ \\
$9_{43}$ & $t^{-7}a^8 + t^{-6}q^2a^6 + t^{-6}q^4a^6 + t^{-5}q^2a^6 + t^{-5}q^6a^4 + t^{-4}q^{-2}a^6 + t^{-4}a^6 + t^{-4}q^4a^4 + t^{-3}q^{-2}a^6 + t^{-3}a^4 + 2t^{-3}q^2a^4 + t^{-2}q^{-4}a^6 + t^{-2}a^4 + t^{-2}q^4a^2 + 2t^{-1}q^{-2}a^4 + q^{-4}a^4 + a^2 + tq^{-6}a^4 + t^2q^{-4}a^2$ \\
$9_{47}$ & $t^{-6}a^6 + t^{-5}q^2a^4 + t^{-5}q^4a^4 + 2t^{-4}q^2a^4 + t^{-4}q^6a^2 + t^{-3}q^{-2}a^4 + 2t^{-3}a^4 + 2t^{-3}q^4a^2 + 2t^{-2}q^{-2}a^4 + t^{-2}a^2 + 3t^{-2}q^2a^2 + t^{-1}q^{-4}a^4 + 4t^{-1}a^2 + t^{-1}q^4 + 3q^{-2}a^2 + 2q^2 + 2tq^{-4}a^2 + t + t^2q^{-6}a^2 + 2t^2q^{-2} + t^3q^{-4}$ \\
$10_{124}$ & $t^{-10}q^2a^{12} + t^{-9}q^4a^{10} + t^{-9}q^6a^{10} + t^{-8}q^{-2}a^{12} + t^{-8}q^8a^8 + 2t^{-7}a^{10} + t^{-7}q^2a^{10} + t^{-6}q^2a^8 + t^{-6}q^4a^8 + t^{-5}q^{-4}a^{10} + t^{-5}q^{-2}a^{10} + t^{-4}q^{-2}a^8 + t^{-4}a^8 + t^{-3}q^{-6}a^{10} + t^{-2}q^{-4}a^8 + q^{-8}a^8$ \\
$10_{125}$ & $t^{-5}q^4a^2 + t^{-4}q^6 + t^{-3}a^2 + 2t^{-2}q^2 + t^{-1}q^{-4}a^2 + t^{-1}q^4a^{-2} + 2q^{-2} + 1 + ta^{-2} + t^2q^{-6} + t^3q^{-4}a^{-2}$ \\
$10_{128}$ & $t^{-10}a^{12} + t^{-9}q^2a^{10} + t^{-9}q^4a^{10} + t^{-8}a^{10} + t^{-8}q^6a^8 + t^{-7}q^{-2}a^{10} + t^{-7}a^{10} + t^{-7}q^2a^8 + t^{-7}q^4a^8 + t^{-6}a^8 + 2t^{-6}q^2a^8 + t^{-6}q^6a^6 + t^{-5}q^{-4}a^{10} + t^{-5}q^{-2}a^8 + t^{-5}a^8 + t^{-5}q^4a^6 + 2t^{-4}q^{-2}a^8 + t^{-4}a^6 + t^{-4}q^2a^6 + t^{-3}q^{-4}a^8 + t^{-3}a^6 + t^{-2}q^{-6}a^8 + t^{-2}q^{-2}a^6 + t^{-1}q^{-4}a^6 + q^{-6}a^6$ \\
$10_{132}$ & $q^2a^{-2} + tq^2a^{-2} + t^2q^{-2}a^{-2} + t^2q^4a^{-4} + t^3q^{-2}a^{-2} + t^3a^{-4} + 2t^4a^{-4} + t^5q^2a^{-6} + t^6q^{-4}a^{-4} + t^7q^{-2}a^{-6}$ \\
$10_{134}$ & $t^{-10}a^{12} + t^{-9}q^2a^{10} + t^{-9}q^4a^{10} + t^{-8}q^2a^{10} + t^{-8}q^6a^8 + t^{-7}q^{-2}a^{10} + t^{-7}a^{10} + 2t^{-7}q^4a^8 + t^{-6}q^{-2}a^{10} + t^{-6}a^8 + 2t^{-6}q^2a^8 + t^{-6}q^6a^6 + t^{-5}q^{-4}a^{10} + 3t^{-5}a^8 + t^{-5}q^4a^6 + 2t^{-4}q^{-2}a^8 + 2t^{-4}q^2a^6 + 2t^{-3}q^{-4}a^8 + t^{-3}a^6 + t^{-2}q^{-6}a^8 + 2t^{-2}q^{-2}a^6 + t^{-1}q^{-4}a^6 + q^{-6}a^6$ \\
$10_{136}$ & $t^{-4}q^2a^2 + t^{-3}a^2 + t^{-3}q^4 + t^{-2}q^{-2}a^2 + t^{-2}q^2 + 3t^{-1} + q^{-2} + 1 + 2q^2a^{-2} + tq^{-4} + ta^{-2} + 2t^2q^{-2}a^{-2} + t^3a^{-4}$ \\
$10_{138}$ & $t^{-6}a^6 + t^{-5}q^2a^4 + t^{-5}q^4a^4 + 2t^{-4}q^2a^4 + t^{-4}q^6a^2 + t^{-3}q^{-2}a^4 + 2t^{-3}a^4 + 2t^{-3}q^4a^2 + 2t^{-2}q^{-2}a^4 + t^{-2}a^2 + 4t^{-2}q^2a^2 + t^{-1}q^{-4}a^4 + 4t^{-1}a^2 + 2t^{-1}q^4 + 4q^{-2}a^2 + 2q^2 + 2tq^{-4}a^2 + 3t + t^2q^{-6}a^2 + 2t^2q^{-2} + t^2q^2a^{-2} + 2t^3q^{-4} + t^4q^{-2}a^{-2}$ \\
$10_{139}$ & $t^{-10}q^2a^{12} + t^{-9}a^{12} + t^{-9}q^4a^{10} + t^{-9}q^6a^{10} + t^{-8}q^{-2}a^{12} + t^{-8}q^2a^{10} + t^{-8}q^8a^8 + 2t^{-7}a^{10} + t^{-7}q^2a^{10} + t^{-6}q^{-2}a^{10} + t^{-6}q^2a^8 + t^{-6}q^4a^8 + t^{-5}q^{-4}a^{10} + t^{-5}q^{-2}a^{10} + t^{-5}a^8 + t^{-4}q^{-2}a^8 + t^{-4}a^8 + t^{-3}q^{-6}a^{10} + t^{-2}q^{-4}a^8 + q^{-8}a^8$ \\
$10_{142}$ & $t^{-10}a^{12} + t^{-9}q^2a^{10} + t^{-9}q^4a^{10} + t^{-8}q^6a^8 + t^{-7}q^{-2}a^{10} + t^{-7}a^{10} + t^{-7}q^4a^8 + t^{-6}a^8 + 2t^{-6}q^2a^8 + t^{-6}q^6a^6 + t^{-5}q^{-4}a^{10} + t^{-5}a^8 + t^{-5}q^4a^6 + 2t^{-4}q^{-2}a^8 + t^{-4}q^2a^6 + t^{-3}q^{-4}a^8 + t^{-3}a^6 + t^{-2}q^{-6}a^8 + t^{-2}q^{-2}a^6 + t^{-1}q^{-4}a^6 + q^{-6}a^6$ \\
$10_{145}$ & $q^4a^{-4} + t^2a^{-4} + t^3a^{-4} + t^3q^2a^{-6} + t^4q^{-4}a^{-4} + t^4q^2a^{-6} + t^5q^{-2}a^{-6} + t^5a^{-6} + t^6q^{-2}a^{-6} + t^6q^2a^{-8} + t^7a^{-8} + t^8q^{-2}a^{-8} + t^9a^{-10}$ \\
$10_{152}$ & $2t^{-10}q^2a^{12} + t^{-9}a^{12} + 2t^{-9}q^4a^{10} + t^{-9}q^6a^{10} + 2t^{-8}q^{-2}a^{12} + t^{-8}q^2a^{10} + t^{-8}q^8a^8 + 4t^{-7}a^{10} + t^{-7}q^2a^{10} + t^{-6}q^{-2}a^{10} + 2t^{-6}q^2a^8 + t^{-6}q^4a^8 + 2t^{-5}q^{-4}a^{10} + t^{-5}q^{-2}a^{10} + t^{-5}a^8 + 2t^{-4}q^{-2}a^8 + t^{-4}a^8 + t^{-3}q^{-6}a^{10} + t^{-2}q^{-4}a^8 + q^{-8}a^8$ \\
$10_{153}$ & $t^{-5}q^4a^2 + t^{-4}q^6 + t^{-3}a^2 + 2t^{-2}q^2 + t^{-1}q^{-4}a^2 + t^{-1}q^2 + t^{-1}q^4a^{-2} + 2q^{-2} + 2 + q^4a^{-2} + tq^{-2} + ta^{-2} + tq^2a^{-2} + t^2q^{-6} + 2t^2a^{-2} + t^3q^{-4}a^{-2} + t^3q^{-2}a^{-2} + t^3q^2a^{-4} + t^4q^{-4}a^{-2} + t^4a^{-4} + t^5q^{-2}a^{-4}$ \\
$10_{154}$ & $t^{-10}a^{12} + 2t^{-9}q^2a^{10} + 2t^{-8}a^{10} + t^{-8}q^4a^8 + 2t^{-7}q^{-2}a^{10} + 2t^{-7}q^2a^8 + t^{-7}q^4a^8 + 3t^{-6}a^8 + t^{-6}q^6a^6 + 2t^{-5}q^{-2}a^8 + t^{-5}a^8 + t^{-5}q^2a^6 + t^{-4}q^{-4}a^8 + 2t^{-4}a^6 + t^{-4}q^2a^6 + t^{-3}q^{-4}a^8 + t^{-3}q^{-2}a^6 + t^{-2}q^{-2}a^6 + q^{-6}a^6$ \\
$10_{160}$ & $t^{-7}a^8 + t^{-6}q^2a^6 + t^{-6}q^4a^6 + 2t^{-5}q^2a^6 + t^{-5}q^6a^4 + t^{-4}q^{-2}a^6 + t^{-4}a^6 + 2t^{-4}q^4a^4 + 2t^{-3}q^{-2}a^6 + t^{-3}a^4 + 2t^{-3}q^2a^4 + t^{-2}q^{-4}a^6 + 3t^{-2}a^4 + t^{-2}q^4a^2 + 2t^{-1}q^{-2}a^4 + t^{-1}q^2a^2 + 2q^{-4}a^4 + a^2 + tq^{-6}a^4 + tq^{-2}a^2 + t^2q^{-4}a^2$ \\
$10_{161}$ & $t^{-9}q^2a^{10} + t^{-8}a^{10} + t^{-8}q^4a^8 + t^{-7}q^{-2}a^{10} + t^{-7}q^2a^8 + t^{-7}q^4a^8 + 2t^{-6}a^8 + t^{-6}q^6a^6 + t^{-5}q^{-2}a^8 + t^{-5}a^8 + t^{-5}q^2a^6 + t^{-4}q^{-4}a^8 + t^{-4}a^6 + t^{-4}q^2a^6 + t^{-3}q^{-4}a^8 + t^{-3}q^{-2}a^6 + t^{-2}q^{-2}a^6 + q^{-6}a^6$ \\
$11a_{263}$ & $t^{-11}a^{14} + t^{-11}q^6a^{12} + t^{-10}q^2a^{12} + 3t^{-10}q^4a^{12} + t^{-10}q^8a^{10} + 3t^{-9}q^2a^{12} + 4t^{-9}q^6a^{10} + t^{-8}q^{-2}a^{12} + 5t^{-8}a^{12} + 4t^{-8}q^4a^{10} + t^{-8}q^8a^8 + 3t^{-7}q^{-2}a^{12} + t^{-7}a^{10} + 9t^{-7}q^2a^{10} + t^{-7}q^6a^8 + 3t^{-6}q^{-4}a^{12} + 6t^{-6}a^{10} + 4t^{-6}q^4a^8 + t^{-5}q^{-6}a^{12} + 9t^{-5}q^{-2}a^{10} + 3t^{-5}q^2a^8 + 4t^{-4}q^{-4}a^{10} + 6t^{-4}a^8 + 4t^{-3}q^{-6}a^{10} + 3t^{-3}q^{-2}a^8 + t^{-2}q^{-8}a^{10} + 4t^{-2}q^{-4}a^8 + t^{-1}q^{-6}a^8 + q^{-8}a^8$ \\
$11n_{2}$ & $t^{-9}a^{10} + t^{-8}q^2a^8 + t^{-8}q^4a^8 + 2t^{-7}q^2a^8 + t^{-7}q^6a^6 + t^{-6}q^{-2}a^8 + 2t^{-6}a^8 + 3t^{-6}q^4a^6 + 2t^{-5}q^{-2}a^8 + t^{-5}a^6 + 5t^{-5}q^2a^6 + t^{-5}q^6a^4 + t^{-4}q^{-4}a^8 + 6t^{-4}a^6 + 3t^{-4}q^4a^4 + 5t^{-3}q^{-2}a^6 + 5t^{-3}q^2a^4 + 3t^{-2}q^{-4}a^6 + 5t^{-2}a^4 + t^{-2}q^4a^2 + t^{-1}q^{-6}a^6 + 5t^{-1}q^{-2}a^4 + t^{-1}q^2a^2 + 3q^{-4}a^4 + 2a^2 + tq^{-6}a^4 + tq^{-2}a^2 + t^2q^{-4}a^2$ \\
$11n_{6}$ & $t^{-4}q^2a^2 + t^{-3}a^2 + t^{-3}q^4 + t^{-2}q^{-2}a^2 + t^{-2}q^2 + t^{-2}q^4 + 2t^{-1} + t^{-1}q^2 + t^{-1}q^6a^{-2} + q^{-2} + 2 + q^2a^{-2} + q^4a^{-2} + tq^{-4} + tq^{-2} + ta^{-2} + 3tq^2a^{-2} + t^2q^{-4} + t^2q^{-2}a^{-2} + 2t^2a^{-2} + 2t^2q^4a^{-4} + 3t^3q^{-2}a^{-2} + t^3q^2a^{-4} + t^4q^{-4}a^{-2} + 3t^4a^{-4} + t^5q^{-6}a^{-2} + t^5q^{-2}a^{-4} + t^5q^2a^{-6} + 2t^6q^{-4}a^{-4} + t^7q^{-2}a^{-6}$ \\
$11n_{9}$ & $2t^{-9}q^2a^{10} + 2t^{-8}a^{10} + 2t^{-8}q^4a^8 + t^{-8}q^6a^8 + 2t^{-7}q^{-2}a^{10} + 2t^{-7}q^2a^8 + t^{-7}q^4a^8 + t^{-7}q^8a^6 + 5t^{-6}a^8 + t^{-6}q^2a^8 + t^{-6}q^6a^6 + 2t^{-5}q^{-2}a^8 + t^{-5}a^8 + 3t^{-5}q^2a^6 + 2t^{-5}q^4a^6 + 2t^{-4}q^{-4}a^8 + t^{-4}q^{-2}a^8 + 2t^{-4}a^6 + t^{-4}q^2a^6 + t^{-4}q^6a^4 + t^{-3}q^{-4}a^8 + 3t^{-3}q^{-2}a^6 + 2t^{-3}a^6 + t^{-2}q^{-6}a^8 + t^{-2}q^{-2}a^6 + t^{-2}a^4 + t^{-2}q^2a^4 + 2t^{-1}q^{-4}a^6 + q^{-6}a^6 + q^{-2}a^4 + tq^{-8}a^6 + t^2q^{-6}a^4$ \\
$11n_{12}$ & $t^{-7}q^2a^6 + t^{-6}a^6 + t^{-6}q^4a^4 + t^{-5}q^{-2}a^6 + t^{-5}q^2a^4 + 3t^{-4}a^4 + t^{-3}q^{-2}a^4 + t^{-3}a^4 + 2t^{-3}q^2a^2 + t^{-2}q^{-4}a^4 + t^{-2}a^2 + t^{-2}q^2a^2 + 2t^{-1}q^{-2}a^2 + q^{-2}a^2 + 1$ \\
$11n_{13}$ & $t^{-9}q^2a^{10} + t^{-8}q^4a^8 + t^{-8}q^6a^8 + t^{-7}q^{-2}a^{10} + t^{-7}q^4a^8 + t^{-7}q^8a^6 + 2t^{-6}a^8 + t^{-6}q^2a^8 + t^{-6}q^6a^6 + t^{-5}a^8 + t^{-5}q^2a^6 + 2t^{-5}q^4a^6 + t^{-4}q^{-4}a^8 + t^{-4}q^{-2}a^8 + t^{-4}q^2a^6 + t^{-4}q^6a^4 + t^{-3}q^{-4}a^8 + t^{-3}q^{-2}a^6 + 2t^{-3}a^6 + t^{-2}q^{-6}a^8 + t^{-2}q^{-2}a^6 + t^{-2}q^2a^4 + 2t^{-1}q^{-4}a^6 + q^{-6}a^6 + q^{-2}a^4 + tq^{-8}a^6 + t^2q^{-6}a^4$ \\
$11n_{16}$ & $t^{-9}a^{10} + t^{-8}q^2a^8 + t^{-8}q^4a^8 + t^{-7}a^8 + t^{-7}q^2a^8 + t^{-7}q^6a^6 + t^{-6}q^{-2}a^8 + t^{-6}a^8 + t^{-6}q^2a^6 + 2t^{-6}q^4a^6 + t^{-5}q^{-2}a^8 + t^{-5}a^6 + 4t^{-5}q^2a^6 + t^{-5}q^6a^4 + t^{-4}q^{-4}a^8 + t^{-4}q^{-2}a^6 + 3t^{-4}a^6 + 3t^{-4}q^4a^4 + 4t^{-3}q^{-2}a^6 + t^{-3}a^4 + 3t^{-3}q^2a^4 + 2t^{-2}q^{-4}a^6 + 4t^{-2}a^4 + t^{-2}q^4a^2 + t^{-1}q^{-6}a^6 + 3t^{-1}q^{-2}a^4 + t^{-1}q^2a^2 + 3q^{-4}a^4 + a^2 + tq^{-6}a^4 + tq^{-2}a^2 + t^2q^{-4}a^2$ \\
$11n_{19}$ & $t^{-4}q^4 + t^{-3}q^6a^{-2} + t^{-2} + 2t^{-1}q^2a^{-2} + q^{-4} + q^2a^{-2} + q^4a^{-4} + 2tq^{-2}a^{-2} + ta^{-2} + t^2q^{-2}a^{-2} + t^2a^{-4} + t^2q^2a^{-4} + t^3q^{-6}a^{-2} + t^3a^{-4} + t^4q^{-4}a^{-4} + t^4q^{-2}a^{-4} + t^5a^{-6}$ \\
$11n_{20}$ & $t^{-4}q^2a^2 + t^{-3}a^2 + t^{-3}q^4 + t^{-2}q^{-2}a^2 + 2t^{-2}q^2 + 3t^{-1} + t^{-1}q^4a^{-2} + 2q^{-2} + 1 + 2q^2a^{-2} + tq^{-4} + 3ta^{-2} + 2t^2q^{-2}a^{-2} + t^2q^2a^{-4} + t^3q^{-4}a^{-2} + t^3a^{-4} + t^4q^{-2}a^{-4}$ \\
$11n_{23}$ & $t^{-8}q^2a^8 + t^{-7}a^8 + t^{-7}q^4a^6 + t^{-7}q^6a^6 + t^{-6}q^{-2}a^8 + t^{-6}q^2a^6 + t^{-6}q^4a^6 + t^{-6}q^8a^4 + 2t^{-5}a^6 + 3t^{-5}q^2a^6 + t^{-5}q^6a^4 + t^{-4}q^{-2}a^6 + t^{-4}a^6 + t^{-4}q^2a^4 + 4t^{-4}q^4a^4 + t^{-3}q^{-4}a^6 + 3t^{-3}q^{-2}a^6 + t^{-3}a^4 + 2t^{-3}q^2a^4 + t^{-3}q^6a^2 + t^{-2}q^{-4}a^6 + t^{-2}q^{-2}a^4 + 5t^{-2}a^4 + t^{-2}q^4a^2 + t^{-1}q^{-6}a^6 + 2t^{-1}q^{-2}a^4 + 2t^{-1}q^2a^2 + 4q^{-4}a^4 + a^2 + tq^{-6}a^4 + 2tq^{-2}a^2 + t^2q^{-8}a^4 + t^2q^{-4}a^2 + t^3q^{-6}a^2$ \\
$11n_{24}$ & $t^{-3}q^4a^2 + t^{-2}q^2a^2 + t^{-2}q^6 + 2t^{-1}a^2 + t^{-1}q^4 + q^{-2}a^2 + 1 + 3q^2 + tq^{-4}a^2 + 2t + tq^4a^{-2} + 3t^2q^{-2} + t^2q^2a^{-2} + t^3q^{-4} + 2t^3a^{-2} + t^4q^{-6} + t^4q^{-2}a^{-2} + t^5q^{-4}a^{-2}$ \\
$11n_{27}$ & $t^{-9}q^2a^{10} + t^{-8}a^{10} + t^{-8}q^4a^8 + t^{-8}q^6a^8 + t^{-7}q^{-2}a^{10} + t^{-7}q^2a^8 + t^{-7}q^4a^8 + t^{-7}q^8a^6 + 2t^{-6}a^8 + 2t^{-6}q^2a^8 + t^{-6}q^6a^6 + t^{-5}q^{-2}a^8 + t^{-5}a^8 + t^{-5}q^2a^6 + 3t^{-5}q^4a^6 + t^{-4}q^{-4}a^8 + 2t^{-4}q^{-2}a^8 + t^{-4}a^6 + t^{-4}q^2a^6 + t^{-4}q^6a^4 + t^{-3}q^{-4}a^8 + t^{-3}q^{-2}a^6 + 4t^{-3}a^6 + t^{-2}q^{-6}a^8 + t^{-2}q^{-2}a^6 + 2t^{-2}q^2a^4 + 3t^{-1}q^{-4}a^6 + q^{-6}a^6 + 2q^{-2}a^4 + tq^{-8}a^6 + t^2q^{-6}a^4$ \\
$11n_{30}$ & $t^{-9}a^{10} + t^{-8}q^2a^8 + t^{-8}q^4a^8 + t^{-7}q^2a^8 + t^{-7}q^6a^6 + t^{-6}q^{-2}a^8 + t^{-6}a^8 + 2t^{-6}q^4a^6 + t^{-5}q^{-2}a^8 + t^{-5}a^6 + 3t^{-5}q^2a^6 + t^{-5}q^6a^4 + t^{-4}q^{-4}a^8 + 3t^{-4}a^6 + 2t^{-4}q^4a^4 + 3t^{-3}q^{-2}a^6 + 3t^{-3}q^2a^4 + 2t^{-2}q^{-4}a^6 + 2t^{-2}a^4 + t^{-2}q^4a^2 + t^{-1}q^{-6}a^6 + 3t^{-1}q^{-2}a^4 + 2q^{-4}a^4 + a^2 + tq^{-6}a^4 + t^2q^{-4}a^2$ \\
$11n_{31}$ & $t^{-9}a^{10} + 2t^{-8}q^2a^8 + 2t^{-7}a^8 + t^{-7}q^4a^6 + 2t^{-6}q^{-2}a^8 + 2t^{-6}q^2a^6 + t^{-6}q^4a^6 + 3t^{-5}a^6 + t^{-5}q^2a^6 + t^{-5}q^6a^4 + 2t^{-4}q^{-2}a^6 + t^{-4}a^6 + t^{-4}q^2a^4 + t^{-4}q^4a^4 + t^{-3}q^{-4}a^6 + t^{-3}q^{-2}a^6 + 2t^{-3}a^4 + 2t^{-3}q^2a^4 + t^{-2}q^{-4}a^6 + t^{-2}q^{-2}a^4 + t^{-2}a^4 + t^{-2}q^4a^2 + 2t^{-1}q^{-2}a^4 + q^{-4}a^4 + a^2 + tq^{-6}a^4 + t^2q^{-4}a^2$ \\
$11n_{34}$ & $t^{-5}q^4a^2 + t^{-4}q^2a^2 + t^{-4}q^6 + 2t^{-3}a^2 + t^{-3}q^4 + t^{-2}q^{-2}a^2 + 3t^{-2}q^2 + t^{-2}q^4 + t^{-1}q^{-4}a^2 + 2t^{-1} + t^{-1}q^2 + t^{-1}q^4a^{-2} + t^{-1}q^6a^{-2} + 3q^{-2} + 3 + q^2a^{-2} + q^4a^{-2} + tq^{-4} + tq^{-2} + 2ta^{-2} + 3tq^2a^{-2} + t^2q^{-6} + t^2q^{-4} + t^2q^{-2}a^{-2} + 2t^2a^{-2} + t^2q^4a^{-4} + t^3q^{-4}a^{-2} + 3t^3q^{-2}a^{-2} + t^3q^2a^{-4} + t^4q^{-4}a^{-2} + 2t^4a^{-4} + t^5q^{-6}a^{-2} + t^5q^{-2}a^{-4} + t^6q^{-4}a^{-4}$ \\
$11n_{36}$ & $t^{-7}q^2a^6 + t^{-6}q^4a^4 + t^{-6}q^6a^4 + t^{-5}q^{-2}a^6 + 2t^{-5}q^4a^4 + t^{-5}q^8a^2 + 2t^{-4}a^4 + 4t^{-4}q^2a^4 + 2t^{-4}q^6a^2 + 5t^{-3}a^4 + t^{-3}q^2a^2 + 5t^{-3}q^4a^2 + t^{-2}q^{-4}a^4 + 4t^{-2}q^{-2}a^4 + 7t^{-2}q^2a^2 + t^{-2}q^6 + 2t^{-1}q^{-4}a^4 + t^{-1}q^{-2}a^2 + 8t^{-1}a^2 + 2t^{-1}q^4 + q^{-6}a^4 + 7q^{-2}a^2 + 4q^2 + 5tq^{-4}a^2 + 4t + 2t^2q^{-6}a^2 + 4t^2q^{-2} + t^3q^{-8}a^2 + 2t^3q^{-4} + t^4q^{-6}$ \\
$11n_{38}$ & $t^{-2}a^2 + t^{-1}q^2 + 1 + q^2 + tq^{-2} + tq^4a^{-2} + t^2q^{-2} + t^2a^{-2} + 2t^3a^{-2} + t^4q^2a^{-4} + t^5q^{-4}a^{-2} + t^6q^{-2}a^{-4}$ \\
$11n_{39}$ & $t^{-6}q^4a^4 + 2t^{-5}q^2a^4 + t^{-5}q^6a^2 + 3t^{-4}a^4 + 2t^{-4}q^4a^2 + 2t^{-3}q^{-2}a^4 + 4t^{-3}q^2a^2 + t^{-3}q^4a^2 + t^{-2}q^{-4}a^4 + 5t^{-2}a^2 + t^{-2}q^4 + t^{-2}q^6 + 4t^{-1}q^{-2}a^2 + 2t^{-1}a^2 + 3t^{-1}q^2 + 2q^{-4}a^2 + 4 + 3q^2 + tq^{-6}a^2 + tq^{-4}a^2 + 3tq^{-2} + tq^4a^{-2} + t^2q^{-4} + 3t^2q^{-2} + t^2a^{-2} + 2t^3a^{-2} + t^4q^{-6} + t^5q^{-4}a^{-2}$ \\
$11n_{41}$ & $t^{-8}q^2a^8 + t^{-7}a^8 + t^{-7}q^4a^6 + t^{-7}q^6a^6 + t^{-6}q^{-2}a^8 + t^{-6}q^2a^6 + 2t^{-6}q^4a^6 + t^{-6}q^8a^4 + 2t^{-5}a^6 + 4t^{-5}q^2a^6 + 2t^{-5}q^6a^4 + t^{-4}q^{-2}a^6 + 3t^{-4}a^6 + t^{-4}q^2a^4 + 5t^{-4}q^4a^4 + t^{-3}q^{-4}a^6 + 4t^{-3}q^{-2}a^6 + t^{-3}a^4 + 5t^{-3}q^2a^4 + t^{-3}q^6a^2 + 2t^{-2}q^{-4}a^6 + t^{-2}q^{-2}a^4 + 7t^{-2}a^4 + 2t^{-2}q^4a^2 + t^{-1}q^{-6}a^6 + 5t^{-1}q^{-2}a^4 + 3t^{-1}q^2a^2 + 5q^{-4}a^4 + 3a^2 + 2tq^{-6}a^4 + 3tq^{-2}a^2 + t^2q^{-8}a^4 + 2t^2q^{-4}a^2 + t^3q^{-6}a^2$ \\
$11n_{42}$ & $t^{-5}q^4a^2 + t^{-4}q^2a^2 + t^{-4}q^6 + 2t^{-3}a^2 + t^{-3}q^4 + t^{-2}q^{-2}a^2 + 3t^{-2}q^2 + t^{-2}q^4 + t^{-1}q^{-4}a^2 + 2t^{-1} + t^{-1}q^2 + t^{-1}q^4a^{-2} + t^{-1}q^6a^{-2} + 3q^{-2} + 3 + q^2a^{-2} + q^4a^{-2} + tq^{-4} + tq^{-2} + 2ta^{-2} + 3tq^2a^{-2} + t^2q^{-6} + t^2q^{-4} + t^2q^{-2}a^{-2} + 2t^2a^{-2} + t^2q^4a^{-4} + t^3q^{-4}a^{-2} + 3t^3q^{-2}a^{-2} + t^3q^2a^{-4} + t^4q^{-4}a^{-2} + 2t^4a^{-4} + t^5q^{-6}a^{-2} + t^5q^{-2}a^{-4} + t^6q^{-4}a^{-4}$ \\
$11n_{44}$ & $t^{-7}q^2a^6 + t^{-6}q^4a^4 + t^{-6}q^6a^4 + t^{-5}q^{-2}a^6 + 2t^{-5}q^4a^4 + t^{-5}q^8a^2 + 2t^{-4}a^4 + 4t^{-4}q^2a^4 + 2t^{-4}q^6a^2 + 5t^{-3}a^4 + t^{-3}q^2a^2 + 5t^{-3}q^4a^2 + t^{-2}q^{-4}a^4 + 4t^{-2}q^{-2}a^4 + 7t^{-2}q^2a^2 + t^{-2}q^6 + 2t^{-1}q^{-4}a^4 + t^{-1}q^{-2}a^2 + 8t^{-1}a^2 + 2t^{-1}q^4 + q^{-6}a^4 + 7q^{-2}a^2 + 4q^2 + 5tq^{-4}a^2 + 4t + 2t^2q^{-6}a^2 + 4t^2q^{-2} + t^3q^{-8}a^2 + 2t^3q^{-4} + t^4q^{-6}$ \\
$11n_{45}$ & $t^{-6}q^4a^4 + 2t^{-5}q^2a^4 + t^{-5}q^6a^2 + 3t^{-4}a^4 + 2t^{-4}q^4a^2 + 2t^{-3}q^{-2}a^4 + 4t^{-3}q^2a^2 + t^{-3}q^4a^2 + t^{-2}q^{-4}a^4 + 5t^{-2}a^2 + t^{-2}q^4 + t^{-2}q^6 + 4t^{-1}q^{-2}a^2 + 2t^{-1}a^2 + 3t^{-1}q^2 + 2q^{-4}a^2 + 4 + 3q^2 + tq^{-6}a^2 + tq^{-4}a^2 + 3tq^{-2} + tq^4a^{-2} + t^2q^{-4} + 3t^2q^{-2} + t^2a^{-2} + 2t^3a^{-2} + t^4q^{-6} + t^5q^{-4}a^{-2}$ \\
$11n_{47}$ & $t^{-8}q^2a^8 + t^{-7}a^8 + t^{-7}q^4a^6 + t^{-7}q^6a^6 + t^{-6}q^{-2}a^8 + t^{-6}q^2a^6 + 2t^{-6}q^4a^6 + t^{-6}q^8a^4 + 2t^{-5}a^6 + 4t^{-5}q^2a^6 + 2t^{-5}q^6a^4 + t^{-4}q^{-2}a^6 + 3t^{-4}a^6 + t^{-4}q^2a^4 + 5t^{-4}q^4a^4 + t^{-3}q^{-4}a^6 + 4t^{-3}q^{-2}a^6 + t^{-3}a^4 + 5t^{-3}q^2a^4 + t^{-3}q^6a^2 + 2t^{-2}q^{-4}a^6 + t^{-2}q^{-2}a^4 + 7t^{-2}a^4 + 2t^{-2}q^4a^2 + t^{-1}q^{-6}a^6 + 5t^{-1}q^{-2}a^4 + 3t^{-1}q^2a^2 + 5q^{-4}a^4 + 3a^2 + 2tq^{-6}a^4 + 3tq^{-2}a^2 + t^2q^{-8}a^4 + 2t^2q^{-4}a^2 + t^3q^{-6}a^2$ \\
$11n_{49}$ & $t^{-4}a^4 + t^{-3}q^2a^2 + t^{-2}a^2 + t^{-1}q^{-2}a^2 + t^{-1}q^2 + 2 + q^2 + tq^{-2} + tq^4a^{-2} + t^2q^{-2} + t^2a^{-2} + 2t^3a^{-2} + t^4q^2a^{-4} + t^5q^{-4}a^{-2} + t^6q^{-2}a^{-4}$ \\
$11n_{57}$ & $t^{-9}q^2a^{10} + t^{-8}a^{10} + t^{-8}q^4a^8 + t^{-8}q^6a^8 + t^{-7}q^{-2}a^{10} + t^{-7}q^2a^8 + t^{-7}q^4a^8 + t^{-7}q^8a^6 + 3t^{-6}a^8 + t^{-6}q^2a^8 + t^{-6}q^6a^6 + t^{-5}q^{-2}a^8 + t^{-5}a^8 + 2t^{-5}q^2a^6 + 2t^{-5}q^4a^6 + t^{-4}q^{-4}a^8 + t^{-4}q^{-2}a^8 + t^{-4}a^6 + t^{-4}q^2a^6 + t^{-4}q^6a^4 + t^{-3}q^{-4}a^8 + 2t^{-3}q^{-2}a^6 + 2t^{-3}a^6 + t^{-2}q^{-6}a^8 + t^{-2}q^{-2}a^6 + t^{-2}a^4 + t^{-2}q^2a^4 + 2t^{-1}q^{-4}a^6 + q^{-6}a^6 + q^{-2}a^4 + tq^{-8}a^6 + t^2q^{-6}a^4$ \\
$11n_{60}$ & $t^{-7}q^2a^6 + t^{-6}q^4a^4 + t^{-6}q^6a^4 + t^{-5}q^{-2}a^6 + t^{-5}q^4a^4 + t^{-5}q^8a^2 + 2t^{-4}a^4 + 2t^{-4}q^2a^4 + t^{-4}q^6a^2 + 2t^{-3}a^4 + t^{-3}q^2a^2 + 3t^{-3}q^4a^2 + t^{-2}q^{-4}a^4 + 2t^{-2}q^{-2}a^4 + 3t^{-2}q^2a^2 + t^{-2}q^6 + t^{-1}q^{-4}a^4 + t^{-1}q^{-2}a^2 + 4t^{-1}a^2 + t^{-1}q^4 + q^{-6}a^4 + 3q^{-2}a^2 + 2q^2 + 3tq^{-4}a^2 + t + t^2q^{-6}a^2 + 2t^2q^{-2} + t^3q^{-8}a^2 + t^3q^{-4} + t^4q^{-6}$ \\
$11n_{61}$ & $t^{-8}q^2a^8 + t^{-7}a^8 + t^{-7}q^4a^6 + t^{-7}q^6a^6 + t^{-6}q^{-2}a^8 + t^{-6}q^2a^6 + t^{-6}q^4a^6 + t^{-6}q^8a^4 + 3t^{-5}a^6 + 2t^{-5}q^2a^6 + t^{-5}q^6a^4 + t^{-4}q^{-2}a^6 + t^{-4}a^6 + 2t^{-4}q^2a^4 + 3t^{-4}q^4a^4 + t^{-3}q^{-4}a^6 + 2t^{-3}q^{-2}a^6 + t^{-3}a^4 + 2t^{-3}q^2a^4 + t^{-3}q^6a^2 + t^{-2}q^{-4}a^6 + 2t^{-2}q^{-2}a^4 + 3t^{-2}a^4 + t^{-2}q^4a^2 + t^{-1}q^{-6}a^6 + 2t^{-1}q^{-2}a^4 + t^{-1}a^2 + t^{-1}q^2a^2 + 3q^{-4}a^4 + a^2 + tq^{-6}a^4 + tq^{-2}a^2 + t^2q^{-8}a^4 + t^2q^{-4}a^2 + t^3q^{-6}a^2$ \\
$11n_{64}$ & $t^{-7}a^8 + t^{-6}q^2a^6 + t^{-6}q^4a^6 + t^{-5}a^6 + t^{-5}q^2a^6 + t^{-5}q^6a^4 + t^{-4}q^{-2}a^6 + t^{-4}a^6 + t^{-4}q^2a^4 + 2t^{-4}q^4a^4 + t^{-3}q^{-2}a^6 + t^{-3}a^4 + 2t^{-3}q^2a^4 + t^{-3}q^6a^2 + t^{-2}q^{-4}a^6 + t^{-2}q^{-2}a^4 + 2t^{-2}a^4 + t^{-2}q^4a^2 + 2t^{-1}q^{-2}a^4 + t^{-1}a^2 + 2t^{-1}q^2a^2 + 2q^{-4}a^4 + a^2 + q^4 + tq^{-6}a^4 + 2tq^{-2}a^2 + t^2q^{-4}a^2 + t^2 + t^3q^{-6}a^2 + t^4q^{-4}$ \\
$11n_{67}$ & $t^{-7}q^2a^6 + t^{-6}a^6 + t^{-6}q^4a^4 + t^{-5}q^{-2}a^6 + t^{-5}q^2a^4 + 3t^{-4}a^4 + t^{-3}q^{-2}a^4 + 2t^{-3}q^2a^2 + t^{-2}q^{-4}a^4 + t^{-2}a^2 + t^{-2}q^2a^2 + 2t^{-1}q^{-2}a^2 + t^{-1}q^4 + q^{-2}a^2 + 2 + 2t + t^2q^2a^{-2} + t^3q^{-4} + t^4q^{-2}a^{-2}$ \\
$11n_{69}$ & $t^{-9}a^{10} + t^{-8}q^2a^8 + t^{-8}q^4a^8 + t^{-7}q^2a^8 + t^{-7}q^6a^6 + t^{-6}q^{-2}a^8 + t^{-6}a^8 + 2t^{-6}q^4a^6 + t^{-5}q^{-2}a^8 + t^{-5}a^6 + 4t^{-5}q^2a^6 + t^{-5}q^6a^4 + t^{-4}q^{-4}a^8 + 4t^{-4}a^6 + 3t^{-4}q^4a^4 + 4t^{-3}q^{-2}a^6 + 4t^{-3}q^2a^4 + 2t^{-2}q^{-4}a^6 + 4t^{-2}a^4 + t^{-2}q^4a^2 + t^{-1}q^{-6}a^6 + 4t^{-1}q^{-2}a^4 + t^{-1}q^2a^2 + 3q^{-4}a^4 + 2a^2 + tq^{-6}a^4 + tq^{-2}a^2 + t^2q^{-4}a^2$ \\
$11n_{70}$ & $t^{-4}q^4a^4 + t^{-3}a^4 + t^{-3}q^6a^2 + 2t^{-2}a^4 + t^{-2}q^2a^2 + 3t^{-1}q^2a^2 + q^{-4}a^4 + q^{-2}a^2 + q^4 + 3tq^{-2}a^2 + 2t^2 + t^3q^{-6}a^2 + t^4q^{-4}$ \\
$11n_{73}$ & $t^{-6}q^4a^4 + t^{-5}q^2a^4 + t^{-5}q^6a^2 + 2t^{-4}a^4 + t^{-4}q^4a^2 + t^{-3}q^{-2}a^4 + 3t^{-3}q^2a^2 + t^{-3}q^4a^2 + t^{-2}q^{-4}a^4 + 2t^{-2}a^2 + t^{-2}q^4 + t^{-2}q^6 + 3t^{-1}q^{-2}a^2 + 2t^{-1}a^2 + t^{-1}q^2 + q^{-4}a^2 + 3 + 3q^2 + tq^{-6}a^2 + tq^{-4}a^2 + tq^{-2} + tq^4a^{-2} + t^2q^{-4} + 3t^2q^{-2} + 2t^3a^{-2} + t^4q^{-6} + t^5q^{-4}a^{-2}$ \\
$11n_{74}$ & $t^{-6}q^4a^4 + t^{-5}q^2a^4 + t^{-5}q^6a^2 + 2t^{-4}a^4 + t^{-4}q^4a^2 + t^{-3}q^{-2}a^4 + 3t^{-3}q^2a^2 + t^{-3}q^4a^2 + t^{-2}q^{-4}a^4 + 2t^{-2}a^2 + t^{-2}q^4 + t^{-2}q^6 + 3t^{-1}q^{-2}a^2 + 2t^{-1}a^2 + t^{-1}q^2 + q^{-4}a^2 + 3 + 3q^2 + tq^{-6}a^2 + tq^{-4}a^2 + tq^{-2} + tq^4a^{-2} + t^2q^{-4} + 3t^2q^{-2} + 2t^3a^{-2} + t^4q^{-6} + t^5q^{-4}a^{-2}$ \\
$11n_{76}$ & $t^{-3}q^6a^{-2} + t^{-2}q^4a^{-2} + t^{-2}q^8a^{-4} + 3t^{-1}q^2a^{-2} + t^{-1}q^6a^{-4} + 2a^{-2} + 5q^4a^{-4} + 3tq^{-2}a^{-2} + 3tq^2a^{-4} + tq^6a^{-6} + t^2q^{-4}a^{-2} + 7t^2a^{-4} + t^2q^2a^{-4} + t^2q^4a^{-6} + t^3q^{-6}a^{-2} + 3t^3q^{-2}a^{-4} + 4t^3q^2a^{-6} + t^3q^4a^{-6} + 5t^4q^{-4}a^{-4} + t^4q^{-2}a^{-4} + 2t^4a^{-6} + t^5q^{-6}a^{-4} + 4t^5q^{-2}a^{-6} + 2t^5a^{-6} + t^6q^{-8}a^{-4} + t^6q^{-4}a^{-6} + t^6q^2a^{-8} + t^7q^{-6}a^{-6} + t^7q^{-4}a^{-6} + t^8q^{-2}a^{-8}$ \\
$11n_{77}$ & $t^{-11}a^{14} + 4t^{-10}q^2a^{12} + 2t^{-9}a^{12} + 3t^{-9}q^4a^{10} + t^{-9}q^6a^{10} + 4t^{-8}q^{-2}a^{12} + 2t^{-8}q^2a^{10} + t^{-8}q^8a^8 + 7t^{-7}a^{10} + t^{-7}q^2a^{10} + 2t^{-6}q^{-2}a^{10} + 3t^{-6}q^2a^8 + t^{-6}q^4a^8 + 3t^{-5}q^{-4}a^{10} + t^{-5}q^{-2}a^{10} + 2t^{-5}a^8 + 3t^{-4}q^{-2}a^8 + t^{-4}a^8 + t^{-3}q^{-6}a^{10} + t^{-2}q^{-4}a^8 + q^{-8}a^8$ \\
$11n_{78}$ & $t^{-8}q^2a^8 + t^{-7}q^4a^6 + t^{-7}q^6a^6 + t^{-6}q^{-2}a^8 + t^{-6}q^4a^6 + t^{-6}q^8a^4 + 2t^{-5}a^6 + 4t^{-5}q^2a^6 + t^{-5}q^6a^4 + 2t^{-4}a^6 + t^{-4}q^2a^4 + 5t^{-4}q^4a^4 + t^{-3}q^{-4}a^6 + 4t^{-3}q^{-2}a^6 + 3t^{-3}q^2a^4 + t^{-3}q^6a^2 + t^{-2}q^{-4}a^6 + t^{-2}q^{-2}a^4 + 7t^{-2}a^4 + t^{-2}q^4a^2 + t^{-1}q^{-6}a^6 + 3t^{-1}q^{-2}a^4 + 3t^{-1}q^2a^2 + 5q^{-4}a^4 + 2a^2 + tq^{-6}a^4 + 3tq^{-2}a^2 + t^2q^{-8}a^4 + t^2q^{-4}a^2 + t^3q^{-6}a^2$ \\
$11n_{79}$ & $t^{-4}q^2a^4 + t^{-3}q^4a^2 + t^{-2}q^{-2}a^4 + t^{-2}q^2a^2 + 2t^{-1}a^2 + t^{-1}q^4 + q^{-2}a^2 + 1 + q^2 + tq^{-4}a^2 + 2t + t^2q^{-2} + t^2q^2a^{-2} + t^3q^{-4} + t^4q^{-2}a^{-2}$ \\
$11n_{80}$ & $t^{-4}q^4 + t^{-3}q^6a^{-2} + t^{-2} + 2t^{-1}q^2a^{-2} + q^{-4} + 2q^2a^{-2} + q^4a^{-4} + 2tq^{-2}a^{-2} + 3ta^{-2} + tq^4a^{-4} + 2t^2q^{-2}a^{-2} + t^2a^{-4} + 3t^2q^2a^{-4} + t^3q^{-6}a^{-2} + 4t^3a^{-4} + t^4q^{-4}a^{-4} + 3t^4q^{-2}a^{-4} + 2t^4q^2a^{-6} + t^5q^{-4}a^{-4} + 3t^5a^{-6} + 2t^6q^{-2}a^{-6} + t^7a^{-8}$ \\
$11n_{81}$ & $t^{-9}q^2a^{10} + t^{-8}a^{10} + t^{-8}q^4a^8 + t^{-8}q^6a^8 + t^{-7}q^{-2}a^{10} + t^{-7}q^2a^8 + t^{-7}q^4a^8 + t^{-7}q^8a^6 + 2t^{-6}a^8 + 3t^{-6}q^2a^8 + t^{-6}q^6a^6 + t^{-5}q^{-2}a^8 + t^{-5}a^8 + t^{-5}q^2a^6 + 4t^{-5}q^4a^6 + t^{-4}q^{-4}a^8 + 3t^{-4}q^{-2}a^8 + t^{-4}a^6 + t^{-4}q^2a^6 + t^{-4}q^6a^4 + t^{-3}q^{-4}a^8 + t^{-3}q^{-2}a^6 + 6t^{-3}a^6 + t^{-2}q^{-6}a^8 + t^{-2}q^{-2}a^6 + 3t^{-2}q^2a^4 + 4t^{-1}q^{-4}a^6 + q^{-6}a^6 + 3q^{-2}a^4 + tq^{-8}a^6 + t^2q^{-6}a^4$ \\
$11n_{82}$ & $t^{-3}q^4a^2 + t^{-2}q^2a^2 + t^{-2}q^6 + t^{-1}a^2 + t^{-1}q^4 + q^{-2}a^2 + 1 + 2q^2 + tq^{-4}a^2 + 2t + tq^4a^{-2} + 2t^2q^{-2} + t^2q^2a^{-2} + t^3q^{-4} + t^3a^{-2} + t^4q^{-6} + t^4q^{-2}a^{-2} + t^5q^{-4}a^{-2}$ \\
$11n_{88}$ & $t^{-9}q^2a^{10} + t^{-8}a^{10} + t^{-8}q^4a^8 + t^{-8}q^6a^8 + t^{-7}q^{-2}a^{10} + t^{-7}q^2a^8 + t^{-7}q^4a^8 + t^{-7}q^8a^6 + 2t^{-6}a^8 + t^{-6}q^2a^8 + t^{-6}q^6a^6 + t^{-5}q^{-2}a^8 + t^{-5}a^8 + t^{-5}q^2a^6 + 2t^{-5}q^4a^6 + t^{-4}q^{-4}a^8 + t^{-4}q^{-2}a^8 + t^{-4}a^6 + t^{-4}q^2a^6 + t^{-4}q^6a^4 + t^{-3}q^{-4}a^8 + t^{-3}q^{-2}a^6 + 2t^{-3}a^6 + t^{-2}q^{-6}a^8 + t^{-2}q^{-2}a^6 + t^{-2}q^2a^4 + 2t^{-1}q^{-4}a^6 + q^{-6}a^6 + q^{-2}a^4 + tq^{-8}a^6 + t^2q^{-6}a^4$ \\
$11n_{90}$ & $t^{-9}a^{10} + t^{-8}q^2a^8 + t^{-8}q^4a^8 + t^{-7}q^2a^8 + t^{-7}q^6a^6 + t^{-6}q^{-2}a^8 + t^{-6}a^8 + 2t^{-6}q^4a^6 + t^{-5}q^{-2}a^8 + t^{-5}a^6 + 4t^{-5}q^2a^6 + t^{-5}q^6a^4 + t^{-4}q^{-4}a^8 + 3t^{-4}a^6 + 3t^{-4}q^4a^4 + 4t^{-3}q^{-2}a^6 + 3t^{-3}q^2a^4 + 2t^{-2}q^{-4}a^6 + 4t^{-2}a^4 + t^{-2}q^4a^2 + t^{-1}q^{-6}a^6 + 3t^{-1}q^{-2}a^4 + t^{-1}q^2a^2 + 3q^{-4}a^4 + a^2 + tq^{-6}a^4 + tq^{-2}a^2 + t^2q^{-4}a^2$ \\
$11n_{92}$ & $t^{-5}q^4a^2 + t^{-4}q^2a^2 + t^{-4}q^6 + t^{-3}a^2 + t^{-3}q^4 + t^{-2}q^{-2}a^2 + 2t^{-2}q^2 + t^{-1}q^{-4}a^2 + 2t^{-1} + t^{-1}q^4a^{-2} + 2q^{-2} + 2 + q^2a^{-2} + tq^{-4} + ta^{-2} + tq^2a^{-2} + t^2q^{-6} + t^2q^{-2}a^{-2} + t^3q^{-4}a^{-2} + t^3q^{-2}a^{-2} + t^4a^{-4}$ \\
$11n_{93}$ & $t^{-10}a^{12} + t^{-9}q^2a^{10} + 2t^{-9}q^4a^{10} + 2t^{-8}q^2a^{10} + 2t^{-8}q^6a^8 + t^{-7}q^{-2}a^{10} + 3t^{-7}a^{10} + 3t^{-7}q^4a^8 + 2t^{-6}q^{-2}a^{10} + t^{-6}a^8 + 5t^{-6}q^2a^8 + t^{-6}q^6a^6 + 2t^{-5}q^{-4}a^{10} + 5t^{-5}a^8 + 2t^{-5}q^4a^6 + 5t^{-4}q^{-2}a^8 + 3t^{-4}q^2a^6 + 3t^{-3}q^{-4}a^8 + 3t^{-3}a^6 + 2t^{-2}q^{-6}a^8 + 3t^{-2}q^{-2}a^6 + 2t^{-1}q^{-4}a^6 + q^{-6}a^6$ \\
$11n_{95}$ & $t^{-9}a^{10} + t^{-8}q^2a^8 + t^{-8}q^4a^8 + 2t^{-7}q^2a^8 + t^{-7}q^6a^6 + t^{-6}q^{-2}a^8 + 2t^{-6}a^8 + 2t^{-6}q^4a^6 + 2t^{-5}q^{-2}a^8 + t^{-5}a^6 + 4t^{-5}q^2a^6 + t^{-4}q^{-4}a^8 + 4t^{-4}a^6 + 2t^{-4}q^4a^4 + 4t^{-3}q^{-2}a^6 + 2t^{-3}q^2a^4 + 2t^{-2}q^{-4}a^6 + 3t^{-2}a^4 + t^{-1}q^{-6}a^6 + 2t^{-1}q^{-2}a^4 + 2q^{-4}a^4$ \\
$11n_{96}$ & $t^{-5}q^2a^4 + t^{-4}a^4 + t^{-4}q^4a^2 + t^{-3}q^{-2}a^4 + t^{-3}q^2a^2 + t^{-3}q^4a^2 + 2t^{-2}a^2 + t^{-2}q^2a^2 + t^{-2}q^6 + t^{-1}q^{-2}a^2 + t^{-1}a^2 + t^{-1}q^2 + t^{-1}q^4 + q^{-4}a^2 + q^{-2}a^2 + 2 + 2q^2 + tq^{-4}a^2 + tq^{-2} + 2t + tq^4a^{-2} + 2t^2q^{-2} + t^2q^2a^{-2} + t^3q^{-4} + t^3a^{-2} + t^4q^{-6} + t^4q^{-2}a^{-2} + t^5q^{-4}a^{-2}$ \\
$11n_{97}$ & $t^{-6}q^2a^4 + t^{-5}q^4a^2 + t^{-4}q^{-2}a^4 + 2t^{-3}a^2 + t^{-2}a^2 + t^{-2}q^2 + t^{-1}q^{-4}a^2 + 2t^{-1}q^2 + q^{-2} + 2 + q^4a^{-2} + 2tq^{-2} + tq^2a^{-2} + 3t^2a^{-2} + t^3q^{-2}a^{-2} + t^3q^2a^{-4} + t^4q^{-4}a^{-2} + t^4a^{-4} + t^5q^{-2}a^{-4}$ \\
$11n_{102}$ & $t^{-2}q^2 + t^{-1}q^4a^{-2} + q^{-2} + q^2a^{-2} + 2ta^{-2} + t^2q^{-2}a^{-2} + t^2a^{-2} + t^2q^2a^{-4} + t^3q^{-4}a^{-2} + t^3a^{-4} + t^3q^2a^{-4} + t^4q^{-2}a^{-4} + t^4a^{-4} + t^5q^{-2}a^{-4} + t^5q^2a^{-6} + t^6a^{-6} + t^7q^{-2}a^{-6} + t^8a^{-8}$ \\
$11n_{104}$ & $t^{-9}q^2a^{10} + 2t^{-8}a^{10} + t^{-8}q^4a^8 + t^{-8}q^6a^8 + t^{-7}q^{-2}a^{10} + 2t^{-7}q^2a^8 + t^{-7}q^4a^8 + t^{-7}q^8a^6 + 3t^{-6}a^8 + t^{-6}q^2a^8 + t^{-6}q^6a^6 + 2t^{-5}q^{-2}a^8 + t^{-5}a^8 + 2t^{-5}q^2a^6 + 2t^{-5}q^4a^6 + t^{-4}q^{-4}a^8 + t^{-4}q^{-2}a^8 + 2t^{-4}a^6 + t^{-4}q^2a^6 + t^{-4}q^6a^4 + t^{-3}q^{-4}a^8 + 2t^{-3}q^{-2}a^6 + 2t^{-3}a^6 + t^{-2}q^{-6}a^8 + t^{-2}q^{-2}a^6 + t^{-2}a^4 + t^{-2}q^2a^4 + 2t^{-1}q^{-4}a^6 + q^{-6}a^6 + q^{-2}a^4 + tq^{-8}a^6 + t^2q^{-6}a^4$ \\
$11n_{107}$ & $t^{-8}q^2a^8 + t^{-7}a^8 + t^{-7}q^4a^6 + t^{-7}q^6a^6 + t^{-6}q^{-2}a^8 + t^{-6}q^2a^6 + t^{-6}q^4a^6 + t^{-6}q^8a^4 + 2t^{-5}a^6 + 2t^{-5}q^2a^6 + t^{-5}q^6a^4 + t^{-4}q^{-2}a^6 + t^{-4}a^6 + t^{-4}q^2a^4 + 3t^{-4}q^4a^4 + t^{-3}q^{-4}a^6 + 2t^{-3}q^{-2}a^6 + t^{-3}a^4 + 2t^{-3}q^2a^4 + t^{-3}q^6a^2 + t^{-2}q^{-4}a^6 + t^{-2}q^{-2}a^4 + 3t^{-2}a^4 + t^{-2}q^4a^2 + t^{-1}q^{-6}a^6 + 2t^{-1}q^{-2}a^4 + t^{-1}q^2a^2 + 3q^{-4}a^4 + a^2 + tq^{-6}a^4 + tq^{-2}a^2 + t^2q^{-8}a^4 + t^2q^{-4}a^2 + t^3q^{-6}a^2$ \\
$11n_{111}$ & $t^{-6}q^2a^6 + 2t^{-5}a^6 + t^{-5}q^4a^4 + t^{-4}q^{-2}a^6 + 2t^{-4}q^2a^4 + t^{-4}q^4a^4 + 3t^{-3}a^4 + t^{-3}q^6a^2 + 2t^{-2}q^{-2}a^4 + t^{-2}a^4 + 2t^{-2}q^2a^2 + t^{-1}q^{-4}a^4 + 2t^{-1}a^2 + 2t^{-1}q^2a^2 + q^{-4}a^4 + 2q^{-2}a^2 + q^4 + 2tq^{-2}a^2 + t^2 + t^3q^{-6}a^2 + t^4q^{-4}$ \\
$11n_{116}$ & $t^{-4}q^2a^2 + t^{-3}q^4 + t^{-2}q^{-2}a^2 + 2t^{-1} + 2 + q^2a^{-2} + tq^{-4} + tq^2a^{-2} + t^2q^{-2}a^{-2} + t^2a^{-2} + t^3q^{-2}a^{-2} + t^3q^2a^{-4} + t^4a^{-4} + t^5q^{-2}a^{-4} + t^6a^{-6}$ \\
$11n_{118}$ & $t^{-9}a^{10} + t^{-8}q^2a^8 + t^{-8}q^4a^8 + t^{-7}q^2a^8 + t^{-7}q^6a^6 + t^{-6}q^{-2}a^8 + t^{-6}a^8 + t^{-6}q^4a^6 + t^{-5}q^{-2}a^8 + t^{-5}a^6 + 3t^{-5}q^2a^6 + t^{-4}q^{-4}a^8 + 2t^{-4}a^6 + 2t^{-4}q^4a^4 + 3t^{-3}q^{-2}a^6 + t^{-3}q^2a^4 + t^{-2}q^{-4}a^6 + 2t^{-2}a^4 + t^{-1}q^{-6}a^6 + t^{-1}q^{-2}a^4 + 2q^{-4}a^4$ \\
$11n_{120}$ & $t^{-7}q^2a^6 + t^{-6}a^6 + t^{-6}q^4a^4 + t^{-6}q^6a^4 + t^{-5}q^{-2}a^6 + t^{-5}q^2a^4 + 2t^{-5}q^4a^4 + t^{-5}q^8a^2 + 2t^{-4}a^4 + 3t^{-4}q^2a^4 + 2t^{-4}q^6a^2 + t^{-3}q^{-2}a^4 + 3t^{-3}a^4 + t^{-3}q^2a^2 + 4t^{-3}q^4a^2 + t^{-2}q^{-4}a^4 + 3t^{-2}q^{-2}a^4 + t^{-2}a^2 + 5t^{-2}q^2a^2 + t^{-2}q^6 + 2t^{-1}q^{-4}a^4 + t^{-1}q^{-2}a^2 + 6t^{-1}a^2 + 2t^{-1}q^4 + q^{-6}a^4 + 5q^{-2}a^2 + 3q^2 + 4tq^{-4}a^2 + 2t + 2t^2q^{-6}a^2 + 3t^2q^{-2} + t^3q^{-8}a^2 + 2t^3q^{-4} + t^4q^{-6}$ \\
$11n_{126}$ & $2t^{-10}a^{12} + 2t^{-9}q^2a^{10} + 2t^{-9}q^4a^{10} + t^{-8}a^{10} + t^{-8}q^2a^{10} + 2t^{-8}q^6a^8 + 2t^{-7}q^{-2}a^{10} + 2t^{-7}a^{10} + t^{-7}q^2a^8 + 2t^{-7}q^4a^8 + t^{-6}q^{-2}a^{10} + 2t^{-6}a^8 + 4t^{-6}q^2a^8 + t^{-6}q^6a^6 + 2t^{-5}q^{-4}a^{10} + t^{-5}q^{-2}a^8 + 3t^{-5}a^8 + 2t^{-5}q^4a^6 + 4t^{-4}q^{-2}a^8 + t^{-4}a^6 + 2t^{-4}q^2a^6 + 2t^{-3}q^{-4}a^8 + 2t^{-3}a^6 + 2t^{-2}q^{-6}a^8 + 2t^{-2}q^{-2}a^6 + 2t^{-1}q^{-4}a^6 + q^{-6}a^6$ \\
$11n_{133}$ & $t^{-8}q^2a^8 + 2t^{-7}a^8 + t^{-7}q^4a^6 + t^{-7}q^6a^6 + t^{-6}q^{-2}a^8 + 2t^{-6}q^2a^6 + 2t^{-6}q^4a^6 + t^{-6}q^8a^4 + 3t^{-5}a^6 + 2t^{-5}q^2a^6 + 2t^{-5}q^6a^4 + 2t^{-4}q^{-2}a^6 + 2t^{-4}a^6 + 2t^{-4}q^2a^4 + 3t^{-4}q^4a^4 + t^{-3}q^{-4}a^6 + 2t^{-3}q^{-2}a^6 + 2t^{-3}a^4 + 4t^{-3}q^2a^4 + t^{-3}q^6a^2 + 2t^{-2}q^{-4}a^6 + 2t^{-2}q^{-2}a^4 + 3t^{-2}a^4 + 2t^{-2}q^4a^2 + t^{-1}q^{-6}a^6 + 4t^{-1}q^{-2}a^4 + t^{-1}a^2 + t^{-1}q^2a^2 + 3q^{-4}a^4 + 2a^2 + 2tq^{-6}a^4 + tq^{-2}a^2 + t^2q^{-8}a^4 + 2t^2q^{-4}a^2 + t^3q^{-6}a^2$ \\
$11n_{135}$ & $t^{-8}q^2a^8 + t^{-7}a^8 + t^{-7}q^4a^6 + t^{-6}q^{-2}a^8 + t^{-6}q^2a^6 + t^{-6}q^4a^6 + 2t^{-5}a^6 + t^{-5}q^2a^6 + t^{-5}q^6a^4 + t^{-4}q^{-2}a^6 + t^{-4}a^6 + t^{-4}q^2a^4 + t^{-4}q^4a^4 + t^{-3}q^{-4}a^6 + t^{-3}q^{-2}a^6 + t^{-3}a^4 + 2t^{-3}q^2a^4 + t^{-2}q^{-4}a^6 + t^{-2}q^{-2}a^4 + t^{-2}a^4 + t^{-2}q^4a^2 + 2t^{-1}q^{-2}a^4 + q^{-4}a^4 + a^2 + tq^{-6}a^4 + t^2q^{-4}a^2$ \\
$11n_{138}$ & $t^{-2}q^2a^2 + t^{-1}q^4 + q^{-2}a^2 + 1 + q^2 + 2t + tq^4a^{-2} + t^2q^{-2} + t^2q^2a^{-2} + t^3q^{-4} + 2t^3a^{-2} + t^4q^{-2}a^{-2} + t^4q^2a^{-4} + t^5q^{-4}a^{-2} + t^6q^{-2}a^{-4}$ \\
$11n_{143}$ & $t^{-6}q^4a^4 + t^{-5}q^2a^4 + t^{-5}q^6a^2 + t^{-4}a^4 + t^{-4}q^4a^2 + t^{-3}q^{-2}a^4 + 2t^{-3}q^2a^2 + t^{-2}q^{-4}a^4 + 2t^{-2}a^2 + t^{-2}q^2a^2 + t^{-2}q^4 + 2t^{-1}q^{-2}a^2 + t^{-1}a^2 + t^{-1}q^2 + t^{-1}q^4 + q^{-4}a^2 + q^{-2}a^2 + 2 + q^2 + tq^{-6}a^2 + tq^{-2} + 2t + t^2q^{-4} + t^2q^{-2} + t^2q^2a^{-2} + t^3q^{-4} + t^3a^{-2} + t^4q^{-2}a^{-2}$ \\
$11n_{145}$ & $t^{-5}q^2a^4 + 2t^{-4}a^4 + t^{-4}q^4a^2 + t^{-3}q^{-2}a^4 + 2t^{-3}q^2a^2 + t^{-3}q^4a^2 + 3t^{-2}a^2 + t^{-2}q^6 + 2t^{-1}q^{-2}a^2 + t^{-1}a^2 + 2t^{-1}q^2 + q^{-4}a^2 + 3 + 2q^2 + tq^{-4}a^2 + 2tq^{-2} + tq^4a^{-2} + 2t^2q^{-2} + t^2a^{-2} + t^3a^{-2} + t^4q^{-6} + t^5q^{-4}a^{-2}$ \\
$11n_{147}$ & $t^{-8}q^2a^8 + 2t^{-7}a^8 + t^{-7}q^4a^6 + t^{-7}q^6a^6 + t^{-6}q^{-2}a^8 + 2t^{-6}q^2a^6 + 2t^{-6}q^4a^6 + t^{-6}q^8a^4 + 2t^{-5}a^6 + 3t^{-5}q^2a^6 + 2t^{-5}q^6a^4 + 2t^{-4}q^{-2}a^6 + 2t^{-4}a^6 + t^{-4}q^2a^4 + 4t^{-4}q^4a^4 + t^{-3}q^{-4}a^6 + 3t^{-3}q^{-2}a^6 + 2t^{-3}a^4 + 4t^{-3}q^2a^4 + t^{-3}q^6a^2 + 2t^{-2}q^{-4}a^6 + t^{-2}q^{-2}a^4 + 5t^{-2}a^4 + 2t^{-2}q^4a^2 + t^{-1}q^{-6}a^6 + 4t^{-1}q^{-2}a^4 + 2t^{-1}q^2a^2 + 4q^{-4}a^4 + 2a^2 + 2tq^{-6}a^4 + 2tq^{-2}a^2 + t^2q^{-8}a^4 + 2t^2q^{-4}a^2 + t^3q^{-6}a^2$ \\
$11n_{148}$ & $t^{-7}q^2a^6 + t^{-6}a^6 + t^{-6}q^4a^4 + t^{-6}q^6a^4 + t^{-5}q^{-2}a^6 + t^{-5}q^2a^4 + 3t^{-5}q^4a^4 + t^{-5}q^8a^2 + 2t^{-4}a^4 + 4t^{-4}q^2a^4 + 3t^{-4}q^6a^2 + t^{-3}q^{-2}a^4 + 6t^{-3}a^4 + t^{-3}q^2a^2 + 5t^{-3}q^4a^2 + t^{-2}q^{-4}a^4 + 4t^{-2}q^{-2}a^4 + t^{-2}a^2 + 9t^{-2}q^2a^2 + t^{-2}q^6 + 3t^{-1}q^{-4}a^4 + t^{-1}q^{-2}a^2 + 8t^{-1}a^2 + 3t^{-1}q^4 + q^{-6}a^4 + 9q^{-2}a^2 + 4q^2 + 5tq^{-4}a^2 + 5t + 3t^2q^{-6}a^2 + 4t^2q^{-2} + t^3q^{-8}a^2 + 3t^3q^{-4} + t^4q^{-6}$ \\
$11n_{149}$ & $t^{-8}q^2a^8 + t^{-7}a^8 + t^{-7}q^4a^6 + t^{-7}q^6a^6 + t^{-6}q^{-2}a^8 + t^{-6}q^2a^6 + 2t^{-6}q^4a^6 + t^{-6}q^8a^4 + 2t^{-5}a^6 + 2t^{-5}q^2a^6 + 2t^{-5}q^6a^4 + t^{-4}q^{-2}a^6 + 2t^{-4}a^6 + t^{-4}q^2a^4 + 3t^{-4}q^4a^4 + t^{-3}q^{-4}a^6 + 2t^{-3}q^{-2}a^6 + t^{-3}a^4 + 4t^{-3}q^2a^4 + t^{-3}q^6a^2 + 2t^{-2}q^{-4}a^6 + t^{-2}q^{-2}a^4 + 3t^{-2}a^4 + 2t^{-2}q^4a^2 + t^{-1}q^{-6}a^6 + 4t^{-1}q^{-2}a^4 + t^{-1}q^2a^2 + 3q^{-4}a^4 + 2a^2 + 2tq^{-6}a^4 + tq^{-2}a^2 + t^2q^{-8}a^4 + 2t^2q^{-4}a^2 + t^3q^{-6}a^2$ \\
$11n_{150}$ & $t^{-8}a^8 + t^{-7}q^2a^6 + t^{-7}q^4a^6 + 2t^{-6}q^2a^6 + t^{-6}q^6a^4 + t^{-5}q^{-2}a^6 + 2t^{-5}a^6 + 3t^{-5}q^4a^4 + 2t^{-4}q^{-2}a^6 + t^{-4}a^4 + 6t^{-4}q^2a^4 + t^{-4}q^6a^2 + t^{-3}q^{-4}a^6 + 8t^{-3}a^4 + 4t^{-3}q^4a^2 + 6t^{-2}q^{-2}a^4 + 7t^{-2}q^2a^2 + 3t^{-1}q^{-4}a^4 + 8t^{-1}a^2 + t^{-1}q^4 + q^{-6}a^4 + 7q^{-2}a^2 + 3q^2 + 4tq^{-4}a^2 + 3t + t^2q^{-6}a^2 + 3t^2q^{-2} + t^3q^{-4}$ \\
$11n_{151}$ & $t^{-7}q^4a^6 + 2t^{-6}q^2a^6 + t^{-6}q^6a^4 + 3t^{-5}a^6 + 2t^{-5}q^4a^4 + 2t^{-4}q^{-2}a^6 + 4t^{-4}q^2a^4 + t^{-4}q^4a^4 + t^{-3}q^{-4}a^6 + 5t^{-3}a^4 + t^{-3}q^4a^2 + t^{-3}q^6a^2 + 4t^{-2}q^{-2}a^4 + 2t^{-2}a^4 + 3t^{-2}q^2a^2 + 2t^{-1}q^{-4}a^4 + 3t^{-1}a^2 + 3t^{-1}q^2a^2 + q^{-6}a^4 + q^{-4}a^4 + 3q^{-2}a^2 + q^4 + tq^{-4}a^2 + 3tq^{-2}a^2 + 2t^2 + t^3q^{-6}a^2 + t^4q^{-4}$ \\
$11n_{152}$ & $t^{-7}q^4a^6 + 2t^{-6}q^2a^6 + t^{-6}q^6a^4 + 3t^{-5}a^6 + 2t^{-5}q^4a^4 + 2t^{-4}q^{-2}a^6 + 4t^{-4}q^2a^4 + t^{-4}q^4a^4 + t^{-3}q^{-4}a^6 + 5t^{-3}a^4 + t^{-3}q^4a^2 + t^{-3}q^6a^2 + 4t^{-2}q^{-2}a^4 + 2t^{-2}a^4 + 3t^{-2}q^2a^2 + 2t^{-1}q^{-4}a^4 + 3t^{-1}a^2 + 3t^{-1}q^2a^2 + q^{-6}a^4 + q^{-4}a^4 + 3q^{-2}a^2 + q^4 + tq^{-4}a^2 + 3tq^{-2}a^2 + 2t^2 + t^3q^{-6}a^2 + t^4q^{-4}$ \\
$11n_{153}$ & $t^{-6}q^2a^4 + t^{-5}q^4a^2 + t^{-5}q^6a^2 + t^{-4}q^{-2}a^4 + 2t^{-4}q^4a^2 + t^{-4}q^8 + 2t^{-3}a^2 + 3t^{-3}q^2a^2 + 2t^{-3}q^6 + 4t^{-2}a^2 + t^{-2}q^2 + 4t^{-2}q^4 + t^{-1}q^{-4}a^2 + 3t^{-1}q^{-2}a^2 + 6t^{-1}q^2 + t^{-1}q^6a^{-2} + 2q^{-4}a^2 + q^{-2} + 7 + 2q^4a^{-2} + tq^{-6}a^2 + 6tq^{-2} + 3tq^2a^{-2} + 4t^2q^{-4} + 4t^2a^{-2} + 2t^3q^{-6} + 3t^3q^{-2}a^{-2} + t^4q^{-8} + 2t^4q^{-4}a^{-2} + t^5q^{-6}a^{-2}$ \\
$11n_{155}$ & $t^{-6}a^6 + t^{-5}q^2a^4 + t^{-5}q^4a^4 + t^{-4}a^4 + 2t^{-4}q^2a^4 + t^{-4}q^6a^2 + t^{-3}q^{-2}a^4 + 2t^{-3}a^4 + t^{-3}q^2a^2 + 3t^{-3}q^4a^2 + 2t^{-2}q^{-2}a^4 + t^{-2}a^2 + 5t^{-2}q^2a^2 + t^{-2}q^6 + t^{-1}q^{-4}a^4 + t^{-1}q^{-2}a^2 + 5t^{-1}a^2 + 3t^{-1}q^4 + 5q^{-2}a^2 + 1 + 4q^2 + 3tq^{-4}a^2 + 5t + tq^4a^{-2} + t^2q^{-6}a^2 + 4t^2q^{-2} + 2t^2q^2a^{-2} + 3t^3q^{-4} + t^3a^{-2} + t^4q^{-6} + 2t^4q^{-2}a^{-2} + t^5q^{-4}a^{-2}$ \\
$11n_{158}$ & $t^{-8}q^2a^8 + t^{-7}a^8 + t^{-7}q^4a^6 + t^{-7}q^6a^6 + t^{-6}q^{-2}a^8 + t^{-6}q^2a^6 + 2t^{-6}q^4a^6 + t^{-6}q^8a^4 + 2t^{-5}a^6 + 3t^{-5}q^2a^6 + 2t^{-5}q^6a^4 + t^{-4}q^{-2}a^6 + 3t^{-4}a^6 + t^{-4}q^2a^4 + 4t^{-4}q^4a^4 + t^{-3}q^{-4}a^6 + 3t^{-3}q^{-2}a^6 + t^{-3}a^4 + 5t^{-3}q^2a^4 + t^{-3}q^6a^2 + 2t^{-2}q^{-4}a^6 + t^{-2}q^{-2}a^4 + 5t^{-2}a^4 + 2t^{-2}q^4a^2 + t^{-1}q^{-6}a^6 + 5t^{-1}q^{-2}a^4 + 2t^{-1}q^2a^2 + 4q^{-4}a^4 + 3a^2 + 2tq^{-6}a^4 + 2tq^{-2}a^2 + t^2q^{-8}a^4 + 2t^2q^{-4}a^2 + t^3q^{-6}a^2$ \\
$11n_{161}$ & $t^{-8}a^8 + t^{-7}q^2a^6 + t^{-7}q^4a^6 + 2t^{-6}q^2a^6 + t^{-6}q^6a^4 + t^{-5}q^{-2}a^6 + 2t^{-5}a^6 + 3t^{-5}q^4a^4 + 2t^{-4}q^{-2}a^6 + t^{-4}a^4 + 5t^{-4}q^2a^4 + t^{-4}q^6a^2 + t^{-3}q^{-4}a^6 + 7t^{-3}a^4 + 3t^{-3}q^4a^2 + 5t^{-2}q^{-2}a^4 + 6t^{-2}q^2a^2 + 3t^{-1}q^{-4}a^4 + 6t^{-1}a^2 + t^{-1}q^4 + q^{-6}a^4 + 6q^{-2}a^2 + 2q^2 + 3tq^{-4}a^2 + 2t + t^2q^{-6}a^2 + 2t^2q^{-2} + t^3q^{-4}$ \\
$11n_{166}$ & $t^{-7}q^2a^6 + t^{-6}a^6 + t^{-6}q^4a^4 + t^{-6}q^6a^4 + t^{-5}q^{-2}a^6 + t^{-5}q^2a^4 + 2t^{-5}q^4a^4 + t^{-5}q^8a^2 + 2t^{-4}a^4 + 4t^{-4}q^2a^4 + 2t^{-4}q^6a^2 + t^{-3}q^{-2}a^4 + 4t^{-3}a^4 + t^{-3}q^2a^2 + 5t^{-3}q^4a^2 + t^{-2}q^{-4}a^4 + 4t^{-2}q^{-2}a^4 + t^{-2}a^2 + 6t^{-2}q^2a^2 + t^{-2}q^6 + 2t^{-1}q^{-4}a^4 + t^{-1}q^{-2}a^2 + 8t^{-1}a^2 + 2t^{-1}q^4 + q^{-6}a^4 + 6q^{-2}a^2 + 4q^2 + 5tq^{-4}a^2 + 3t + 2t^2q^{-6}a^2 + 4t^2q^{-2} + t^3q^{-8}a^2 + 2t^3q^{-4} + t^4q^{-6}$ \\
$11n_{169}$ & $t^{-10}a^{12} + t^{-9}q^2a^{10} + 2t^{-9}q^4a^{10} + t^{-8}q^2a^{10} + 2t^{-8}q^6a^8 + t^{-7}q^{-2}a^{10} + 2t^{-7}a^{10} + 2t^{-7}q^4a^8 + t^{-6}q^{-2}a^{10} + t^{-6}a^8 + 4t^{-6}q^2a^8 + t^{-6}q^6a^6 + 2t^{-5}q^{-4}a^{10} + 3t^{-5}a^8 + 2t^{-5}q^4a^6 + 4t^{-4}q^{-2}a^8 + 2t^{-4}q^2a^6 + 2t^{-3}q^{-4}a^8 + 2t^{-3}a^6 + 2t^{-2}q^{-6}a^8 + 2t^{-2}q^{-2}a^6 + 2t^{-1}q^{-4}a^6 + q^{-6}a^6$ \\
$11n_{173}$ & $t^{-8}q^2a^8 + 2t^{-7}a^8 + t^{-7}q^4a^6 + t^{-7}q^6a^6 + t^{-6}q^{-2}a^8 + 2t^{-6}q^2a^6 + 2t^{-6}q^4a^6 + t^{-6}q^8a^4 + 2t^{-5}a^6 + 4t^{-5}q^2a^6 + 2t^{-5}q^6a^4 + 2t^{-4}q^{-2}a^6 + 2t^{-4}a^6 + t^{-4}q^2a^4 + 5t^{-4}q^4a^4 + t^{-3}q^{-4}a^6 + 4t^{-3}q^{-2}a^6 + 2t^{-3}a^4 + 4t^{-3}q^2a^4 + t^{-3}q^6a^2 + 2t^{-2}q^{-4}a^6 + t^{-2}q^{-2}a^4 + 7t^{-2}a^4 + 2t^{-2}q^4a^2 + t^{-1}q^{-6}a^6 + 4t^{-1}q^{-2}a^4 + 3t^{-1}q^2a^2 + 5q^{-4}a^4 + 2a^2 + 2tq^{-6}a^4 + 3tq^{-2}a^2 + t^2q^{-8}a^4 + 2t^2q^{-4}a^2 + t^3q^{-6}a^2$ \\
$11n_{175}$ & $t^{-9}a^{10} + t^{-8}q^2a^8 + t^{-8}q^4a^8 + 2t^{-7}q^2a^8 + t^{-7}q^6a^6 + t^{-6}q^{-2}a^8 + 2t^{-6}a^8 + 3t^{-6}q^4a^6 + 2t^{-5}q^{-2}a^8 + t^{-5}a^6 + 6t^{-5}q^2a^6 + t^{-5}q^6a^4 + t^{-4}q^{-4}a^8 + 6t^{-4}a^6 + 4t^{-4}q^4a^4 + 6t^{-3}q^{-2}a^6 + 5t^{-3}q^2a^4 + 3t^{-2}q^{-4}a^6 + 7t^{-2}a^4 + t^{-2}q^4a^2 + t^{-1}q^{-6}a^6 + 5t^{-1}q^{-2}a^4 + 2t^{-1}q^2a^2 + 4q^{-4}a^4 + 2a^2 + tq^{-6}a^4 + 2tq^{-2}a^2 + t^2q^{-4}a^2$ \\
$11n_{177}$ & $t^{-7}q^2a^6 + t^{-6}a^6 + t^{-6}q^4a^4 + t^{-6}q^6a^4 + t^{-5}q^{-2}a^6 + t^{-5}q^2a^4 + 3t^{-5}q^4a^4 + t^{-5}q^8a^2 + 2t^{-4}a^4 + 5t^{-4}q^2a^4 + 3t^{-4}q^6a^2 + t^{-3}q^{-2}a^4 + 6t^{-3}a^4 + t^{-3}q^2a^2 + 6t^{-3}q^4a^2 + t^{-2}q^{-4}a^4 + 5t^{-2}q^{-2}a^4 + t^{-2}a^2 + 9t^{-2}q^2a^2 + t^{-2}q^6 + 3t^{-1}q^{-4}a^4 + t^{-1}q^{-2}a^2 + 10t^{-1}a^2 + 3t^{-1}q^4 + q^{-6}a^4 + 9q^{-2}a^2 + 5q^2 + 6tq^{-4}a^2 + 5t + 3t^2q^{-6}a^2 + 5t^2q^{-2} + t^3q^{-8}a^2 + 3t^3q^{-4} + t^4q^{-6}$ \\
$11n_{182}$ & $t^{-5}q^6a^2 + 3t^{-4}q^4a^2 + t^{-4}q^8 + 5t^{-3}q^2a^2 + 3t^{-3}q^6 + 7t^{-2}a^2 + 6t^{-2}q^4 + 5t^{-1}q^{-2}a^2 + 10t^{-1}q^2 + t^{-1}q^6a^{-2} + 3q^{-4}a^2 + 11 + q^2 + 3q^4a^{-2} + tq^{-6}a^2 + 10tq^{-2} + 5tq^2a^{-2} + tq^4a^{-2} + 6t^2q^{-4} + t^2q^{-2} + 7t^2a^{-2} + 3t^3q^{-6} + 5t^3q^{-2}a^{-2} + 2t^3a^{-2} + t^4q^{-8} + 3t^4q^{-4}a^{-2} + t^4q^2a^{-4} + t^5q^{-6}a^{-2} + t^5q^{-4}a^{-2} + t^6q^{-2}a^{-4}$ \\
$11n_{183}$ & $t^{-10}a^{12} + 3t^{-9}q^2a^{10} + 2t^{-8}a^{10} + 2t^{-8}q^4a^8 + 3t^{-7}q^{-2}a^{10} + 2t^{-7}q^2a^8 + t^{-7}q^4a^8 + 5t^{-6}a^8 + t^{-6}q^6a^6 + 2t^{-5}q^{-2}a^8 + t^{-5}a^8 + 2t^{-5}q^2a^6 + 2t^{-4}q^{-4}a^8 + 2t^{-4}a^6 + t^{-4}q^2a^6 + t^{-3}q^{-4}a^8 + 2t^{-3}q^{-2}a^6 + t^{-2}q^{-2}a^6 + q^{-6}a^6$ \\
\end{longtable}

\clearpage
\restoregeometry
\end{document}